\theoremstyle{plain}
\newtheorem{thm}{Theorem}[section]
\newtheorem{prop}[thm]{Proposition}
\newtheorem{cor}[thm]{Corollary}
\newtheorem{lem}[thm]{Lemma}
\newtheorem{rmk}[thm]{Remark}
\newcommand{\bQ}{\overline{\mathbb{Q}}}
\newcommand{\bZp}{\overline{\mathbb{Z}}_p}
\newcommand{\C}{\mathbb{C}}
\newcommand{\R}{\mathbb{R}}
\newcommand{\Q}{\mathbb{Q}}
\newcommand{\Z}{\mathbb{Z}}
\newcommand{\lra}{\longrightarrow}
\newcommand{\E}{\mathcal{E}}
\newcommand{\A}{\mathbb{A}}
\newcommand{\D}{\mathcal{D}}
\newcommand{\U}{\mathcal{U}}
\renewcommand{\O}{\mathcal{O}}
\newcommand{\ds}{\displaystyle}
\newcommand{\G}{\Gamma}
\newcommand{\uk}{\underline{k}}
\newcommand{\bs}{\backslash}
\title[On finiteness theorems for automorphic forms]
{On finiteness theorems for automorphic forms}
\author{Takuya Yamauchi}
\keywords{Vector valued automorphic forms, scalar valued automorphic forms, graded vector spaces, graded rings}
\thanks{A part of this work is done during the author was partially supported by 
 JSPS KAKENHI Grant Number (B) No.19H01778.}
\subjclass[2020]{11F46, 11F55}
\address{Takuya Yamauchi \\ 
Mathematical Inst. Tohoku Univ.\\
 6-3,Aoba, Aramaki, Aoba-Ku, Sendai 980-8578, JAPAN}
\email{yamauchi@math.tohoku.ac.jp or takuya.yamauchi.c3@tohoku.ac.jp}
\begin{document} 
\begin{abstract} 
In this paper, for any Shimura datum $(G,\mathcal D)$ satisfying reasonable conditions 
that many interesting cases satisfy, we prove 
some finiteness theorems for any graded vector space consisting of  
automorphic forms on $\mathcal{D}$ of some weights over the graded ring of automorphic forms on $X$ with 
positive parallel weights. 
We also discuss the integral base ring which we can work on. 
To realize automorphic forms as global sections on some coherent sheaves on the minimal compactification,  
we use the notion of reflexive sheaves and higher Koecher principle due to 
Kai-Wen Lan. 
Further, we give a slightly modified version of finiteness results for Siegel modular forms by using only the results of 
Chai-Faltings.         
\end{abstract}

\maketitle 
\section{Introduction}\label{intro} We refer \cite{Lan-example}, \cite{Lan-van} and \cite{Milne-intro} for Shimura data and Shimura varieties. 
Let $\mathcal D$ be the Hermitian symmetric domain associated to a Shimura datum $(G,\mathcal D)$ where 
$G$ is a connected reductive group over $\Q$. 
Let $G(\R)^+$ be the connected component of $G(\R)$ with the identity in 
the real topology. Put $G(\Q)^+=G(\Q)\cap G(\R)^+$. 
Let $(G^{\rm ad},\mathcal D^+)$ be the connected 
Shimura datum for $(G,\mathcal D)$ such that $\mathcal D^+$ is a connected component of $\mathcal D$ and 
$G(\R)^+$ acts transitively on $\mathcal D^+$. 
Let $\A_f=\widehat{\Z}\otimes_\Z\Q$ be the finite part of the ring of adeles of $\Q$.  
For any open compact subgroup $\mathcal U$ of $G(\A_f)$, put 
\begin{equation}\label{xu}
X_{\mathcal U}:=G(\Q)\bs \mathcal D\times G(\A_f)/\mathcal U\simeq 
G(\Q)^+\bs \mathcal D^+\times G(\A_f)/\mathcal U =\coprod_{i\in I}\G_i\bs \mathcal D^+ 
\end{equation}
where $G(\A_f)=\coprod_{i\in I}G(\Q)^+g_i \mathcal U$ and $\G_i=(g_i \mathcal U g^{-1}_i)\cap G(\Q)^+$. 
Clearly each $\G_i$ is commensurable with $G(\Z)$. Here $G(\Z)$ is defined by, first, choosing    
an embedding $\iota_N:G\hookrightarrow GL_N $ for some positive integer $N$ and then by talking the pullback of 
$GL_N(\Z)$ under $\iota_N$. 
It is well-known (cf. Theorem 1 of \cite{Satake-some}) that  
each component $\G_i\bs \mathcal D^+$ has a structure as a quasi-projective algebraic variety over $\C$ even when $\G_i$ is not neat. 
Let $K$ be a maximal compact subgroup of $G(\R)$ and $K_{\C}$ be 
its complexification. Then $\mathcal{D}=G(\R)/Z_G(\R)K$. 
For $\gamma\in G(\R)^+$ and $z\in \mathcal{D}^+$ we write $\gamma \cdot z$ for the natural left action. 
For each algebraic, finite dimensional representation $\rho$ of $K_\C$ with the representation space $V_\rho$, 
we define the holomorphic vector bundle $(G^+(\R) \times_{K,\rho|_K} V_\rho(\C))/K$  on $\D^+$ as 
a quotient of $G^+(\R)\times V_\rho(\C)$ by  
the relation $(g,v)\sim (gk,\rho^{-1}(k)v)$ for $(g,v)\in G^+(\R)\times V_\rho(\C)$ and $k\in K$. 
Since $\D^+$ is simply connected, the above holomorphic vector bundle is trivialized. 
Therefore, there is 
 a canonical automorphic factor associated to $\rho$:  
\begin{equation}\label{auto-factor}
J_\rho:G(\R)^+ \times \mathcal{D}^+\lra {\rm Aut}_\C(V_\rho)
\end{equation}
which is holomorphic in the complex variables of $\mathcal{D}^+$ and it satisfies the cocycle condition. 
We can also associate the automorphic vector bundle on $X_\U$ by 
$$W_{\rho,\mathcal{U}}=Z_G(\R)G(\Q)^+\bs (G(\R)\times G(\A_f)\times V_\rho(\C))/(K\times \mathcal{U})
\simeq \coprod_{i\in I}\G_i\bs (\mathcal D^+ \times V_\rho(\C))  $$
with the relation $(g_\infty,g_f,v)\sim (z_\infty \gamma g_\infty k ,\gamma g_fu,\rho^{-1}(k)v),\ 
z_\infty\in Z_G(\R), \gamma\in G(\Q)^+,\ k\in K ,\ u\in \mathcal{U}$ for 
$(g_\infty,g_f,v)\in \mathcal{D}\times G(\A_f)\times V_\rho$ and each $\G_i$ acts on 
$\mathcal D^+ \times V_\rho(\C)$ by 
$\gamma_i(Z,v)=(\gamma_i Z,J(\gamma_i,Z)v)$. 

Fix an algebraic (or holomorphic) character $\lambda:K_\C\lra \C^\times$ which is a positive parallel weight \cite{Lan-van} 
(equivalently, it is also said to be positive of rational type in the classical language \cite{Satake-some}). 
 We will specify $\lambda$ when we apply the results to Siegel modular forms.   
For any arithmetic subgroup $\G\subset G(\Q)^+$ and an algebraic representation $\rho$ of $K_C$ as above, we define the space $M_\rho(\G)$ which 
consisting of all holomorphic $V_\rho(\C)$-valued functions $F:\mathcal{D}^+\lra V_\rho(\C)$ enjoying the conditions:
\begin{itemize}
\item $F(\gamma\cdot z)=J_\rho(\gamma,z)F(z)$ for any $\gamma\in \G$,  
\item $\ds\lim_{z\to\partial \mathcal{D}}J_\rho(\delta,z)^{-1}F(\delta\cdot z)$ is finite for any $\delta\in G(\Q)^+$
\end{itemize}
where $\partial \mathcal{D}$ is the boundary of Satake compactification \cite{Satake-cpt}  or 
Baily-Borel compactification \cite{BB}. 
In this paper we call $F$ a (classical) automorphic form of weight $\rho$ with respect to $\G$. 
If we replace the above second condition with 
\begin{itemize}
\item $\ds\lim_{z\to\partial \D}J_\rho(\gamma,z)^{-1}F(\gamma\cdot z)=0$ for any $\gamma\in \mathcal{G}(\Q)^+$,
\end{itemize}
then we call $F$ a (classical) cusp form of weight $\rho$ with respect to $\G$. 
We denote by $S_\rho(\G)$ the space of all cusp forms of weight $\rho$ with respect to $\G$. It is well-known 
that both of 
$M_\rho(\G)$ and $S_\rho(\G)$ are finite dimensional vector spaces over $\C$. 
We also  define the graded vector spaces by 
\begin{equation}\label{def1}
M_{\rho,\lambda,\ast}(\G):=\bigoplus_{k\in \Z}M_{\rho\otimes\lambda^k}(\G),\ 
S_{\rho,\lambda,\ast}:=\bigoplus_{k\in \Z}S_{\rho\otimes\lambda^k}(\G).
\end{equation}
 
Put 
\begin{equation}\label{def2}
M_{\lambda,\ast}(\G)=M_{\textbf{1},\ast}(\G).
\end{equation}
where $\textbf{1}$ stands for the trivial representation of $K_\C$. 
Put
\begin{equation}\label{def3}
M_{\rho,\lambda,\ast}(\U)=\bigoplus_{i\in I}M_{\rho,\lambda,\ast}(\G_i),\ 
S_{\rho,\lambda,\ast}(\U)=\bigoplus_{i\in I}S_{\rho,\lambda,\ast}(\G_i),\ 
M_{\lambda,\ast}(\U)=\bigoplus_{i\in I}M_{\textbf{1},\lambda,\ast}(\G_i)
\end{equation}
with respect to (\ref{xu}). 
It will be revealed in the course of proofs of the main theorems that (\ref{def3}) can be defined as the global sections of 
coherent sheaves related to $W_{\rho,\U}$. 

The graded ring $M_{\lambda,\ast}(\G_i)$ (or  $M_{\lambda,\ast}(\U)$)  is consisting of   
automorphic forms of scalar weights proportional to $\lambda$. 
For such a $\lambda$,  
$M_{\lambda,\ast}(\G_i)$ (or  $M_{\lambda,\ast}(\U)$)  is finitely 
generated by Theorem \ref{cla} below. 
For positive parallel weights,  
all cases are described in Section 3.3 of \cite{Lan-van}. 
For example, when $G$ is ${\rm Res}_{F/\Q}SL_2/F$ or ${\rm Res}_{F/\Q}GL_2/F$ for any totally real field $F$ of degree $g$, 
then the weights of characters are parametrized by $g$-tuple non-negative integers 
$(k_1,\ldots,k_g)$ and they are said to be positive and of rational type if 
$k_1=\ldots=k_g>0 $. We have the same condition for the symplectic 
group $Sp_{2g}$ over $\Q$ of rank $g$ whose corresponding highest weights of characters are 
parametrized by $g$-tuple integers $(k_1,\ldots,k_g)$. When $G=GSp_{2g}/\Q$, the positive pararell weights are 
given by the same weights for $Sp_{2g}/\Q$ by ignoring the similitude part (see Section \ref{app}).    

Henceforth we assume 
\begin{equation}\label{G3}
\begin{array}{l}
\mbox{{\rm (dim)} Every $\Q$-simple factor of each component of $X_\G$ or $X_{\U}$ is compact or of dimension}\\
\mbox{\hspace{9mm}  greater than one}.
\end{array}
\end{equation}
Many interesting cases including Hilbert modular varieties, Siegel modular varieties, 
and unitary Shimura varieties satisfy the above condition, and it plays an important role in applying Serre's extension theorem (see the discussion in lines between (\ref{hkp}) and  (\ref{mini-auto})).   

The following theorem seems to be well-known for some cases for experts 
(and even to some non-experts) after Cartan Seminaires (however, this is not a paper directed to experts on Shimura varieties but to the community working on classical modualr forms 
and even over $\C$, most people do not know a reference in the vector-valued case
with my experience from discussions at conferences).
\begin{thm}\label{cla}
Assume (\ref{G3}) for $G$ and an arithmetic subgroup $\G$ of $G(\Q)^+$.  
For any algebraic representation $\rho$ of $K_\C$ and any positive algebraic character of $K_\C$ which is of rational type, 
the graded vector spaces $M_{\rho,\lambda,\ast}(\G)$ and $S_{\rho,\lambda,\ast}(\G)$ are finitely generated over 
the graded ring $M_{\lambda,\ast}(\G)$. 
\end{thm}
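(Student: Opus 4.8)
The plan is to reinterpret both graded objects as modules of global sections of coherent sheaves twisted by powers of an ample line bundle on a normal projective variety, and then to invoke the standard finiteness theorem for such graded modules. First I would recall the Baily--Borel theorem (\cite{BB}, \cite{Satake-cpt}): since $\la$ is a positive character of rational type, the graded ring $R:=M_{\la,\ast}(\G)$ is a finitely generated graded $\C$-algebra with $R_{0}=\C$, and its $\mathrm{Proj}$ is the minimal (Satake--Baily--Borel) compactification $X^{\ast}_{\G}$ of $X_{\G}=\G\bs\D^{+}$, a normal projective variety. After replacing $\la$ by a suitable power $\la^{m}$ (that is, passing to a Veronese subring, which affects neither the ring's nor the module's finite generation) the line bundle $L=L_{\la}$ associated to $\la$ on $X^{\ast}_{\G}$ becomes very ample, and one has $H^{0}(X^{\ast}_{\G},L^{k})=M_{\la^{k}}(\G)$.

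Next, for the representation $\rho$, I would extend the automorphic vector bundle $W_{\rho}$, restricted to the relevant component $X_{\G}$, to a coherent sheaf $\mathcal F_{\rho}$ on $X^{\ast}_{\G}$ for which
\[
M_{\rho\otimes\la^{k}}(\G)=H^{0}\bigl(X^{\ast}_{\G},\ \mathcal F_{\rho}\otimes L^{k}\bigr)\qquad\text{for all }k\in\Z .
\]
The point is that the boundary growth condition in the definition of $M_{\rho\otimes\la^{k}}$ is matched precisely by taking $\mathcal F_{\rho}$ to be the reflexive (canonical) extension of $W_{\rho}$ across the boundary. This is the step that uses the dimension hypothesis (\ref{G3}): it forces the boundary of $X^{\ast}_{\G}$ to have codimension $\geq 2$ in every noncompact factor, so that normality together with Koecher's principle makes holomorphic sections extend across the boundary, and Lan's higher Koecher principle for reflexive sheaves makes the identification rigorous on the (possibly singular) minimal compactification.

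I would then note that the grading is bounded below: because $L$ is ample and $\mathcal F_{\rho}$, being reflexive, is torsion-free on the integral variety $X^{\ast}_{\G}$, we have $H^{0}(X^{\ast}_{\G},\mathcal F_{\rho}\otimes L^{k})=0$ for $k\ll 0$, so $M_{\rho,\la,\ast}(\G)=\bigoplus_{k\geq k_{0}}H^{0}(X^{\ast}_{\G},\mathcal F_{\rho}\otimes L^{k})$ for some $k_{0}$. Finite generation of this graded module over $R=\bigoplus_{k}H^{0}(X^{\ast}_{\G},L^{k})$ is then the standard statement (Serre, \textsc{EGA}~III) that for a coherent sheaf on $\mathrm{Proj}\,R$ with $R$ a finitely generated graded algebra, the associated graded module $\bigoplus_{k}H^{0}(\mathcal F\otimes L^{k})$ is finitely generated over $R$. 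For the cusp forms, the condition of vanishing at the boundary amounts to replacing $\mathcal F_{\rho}$ by the coherent subsheaf $\mathcal F_{\rho}\otimes\mathcal I$, where $\mathcal I$ is the ideal sheaf of the boundary; the same argument then gives finite generation of $S_{\rho,\la,\ast}(\G)$ over $R$.

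The main obstacle is the construction and identification in the second step: producing, on the singular minimal compactification, a single coherent sheaf $\mathcal F_{\rho}$ whose twisted global sections recover $M_{\rho\otimes\la^{k}}(\G)$ with the correct Koecher boundary condition for every $k$ simultaneously. On the open variety $X_{\G}$ the object $W_{\rho}$ is already a vector bundle, so all the difficulty is concentrated at the boundary, and it is precisely there that the dimension hypothesis and the reflexive-sheaf formalism are indispensable; once this identification is secured, the finiteness assertion is a formal consequence of projectivity and the ampleness of $L$.
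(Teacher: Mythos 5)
Your treatment of the holomorphic-form module $M_{\rho,\la,\ast}(\G)$ is essentially the paper's own argument: realize $M_{\rho\otimes\la^k}(\G)$ as $H^0$ of a coherent reflexive extension of $W_\rho$ to $X^{{\rm min}}_\G$ twisted by powers of the ample line bundle attached to $\la$, then quote the standard finiteness for graded modules of twisted sections (the paper cites Lemma 16.1 of \cite{Stacks}; your Serre/EGA III reference is the same statement). The visible difference is that you suppress the toroidal compactification. The paper produces the coherent sheaf as $\pi_\ast W^{{\rm can}}_\rho$ (coherent because $\pi$ is proper), identifies its sections with $M_\rho(\G)$ via (\ref{hkp}), and then needs the identity (\ref{key}), $\pi_\ast W^{{\rm can}}_\la=\omega_\la=j^{{\rm min}}_\ast W_\la$ (Zariski's main theorem, projection formula, uniqueness of reflexive extensions from \cite{H80}, \cite{Ghitza}), to make the twist by $L^{\otimes k}$ come out right; under (dim) your $\mathcal F_\rho=j^{{\rm min}}_\ast W_\rho$ agrees with $\pi_\ast W^{{\rm can}}_\rho$ precisely by Lan's higher Koecher principle, so this is mostly repackaging. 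Two caveats: coherence of $j^{{\rm min}}_\ast W_\rho$ is \emph{not} a formal consequence of normality plus codimension $\ge 2$ (direct images of arbitrary vector bundles off a codimension-two set can fail to be coherent), so the toroidal stage or the analytic arguments behind Theorem 10.14 of \cite{BB} are still doing real work in your step two; and you implicitly assume $W_\rho$ is a vector bundle on $X_\G$, which requires the paper's preliminary reduction to neat $\G$.

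The genuine gap is the cusp form case. You assert that cuspidality "amounts to replacing $\mathcal F_\rho$ by $\mathcal F_\rho\otimes\mathcal I$," with $\mathcal I$ the boundary ideal sheaf on $X^{{\rm min}}_\G$. First, $\mathcal F_\rho\otimes\mathcal I$ is not naturally a subsheaf of $\mathcal F_\rho$ (neither factor is locally free along the boundary, and the map $\mathcal F_\rho\otimes\mathcal I\to\mathcal F_\rho$ may have kernel), so one should at least pass to the image $\mathcal I\cdot\mathcal F_\rho$. More seriously, the identification $H^0(X^{{\rm min}}_\G,\mathcal I\cdot\mathcal F_\rho\otimes L^{\otimes k})=S_{\rho\otimes\la^k}(\G)$ is exactly the hard point and is nowhere justified: it requires that every cusp form be writable, locally at each boundary point, as $\sum_i f_i s_i$ with $f_i$ vanishing on the minimal boundary and $s_i$ local sections of $\mathcal F_\rho$, i.e.\ a comparison of $\pi_\ast\bigl(W^{{\rm can}}_\rho\otimes\mathcal I_{D_\Delta}\bigr)$ with $\mathcal I\cdot\pi_\ast W^{{\rm can}}_\rho$. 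These can differ, because $\mathcal I_{D_\Delta}$ need not be generated by pullbacks of functions from $X^{{\rm min}}_\G$ (pullback ideals acquire multiplicities along divisors contracted by $\pi$). The paper sidesteps this entirely: it takes the cusp-form sheaf to be $\pi_\ast W^{{\rm sub}}_\rho$ with $W^{{\rm sub}}_\rho=W^{{\rm can}}_\rho(-D_\Delta)$, for which $H^0=S_\rho(\G)$ is a cited theorem (Proposition 5.4.2 of \cite{Harris-JDG}, Theorem 4.7 of \cite{Lan-van}), and then twists as in (\ref{isom2}). Alternatively, a cheap repair was available to you: a cusp form times a holomorphic form is cuspidal, so $S_{\rho,\la,\ast}(\G)$ is a graded $M_{\la,\ast}(\G)$-submodule of $M_{\rho,\la,\ast}(\G)$; since $M_{\la,\ast}(\G)$ is a finitely generated $\C$-algebra, hence Noetherian, finite generation of $S_{\rho,\la,\ast}(\G)$ follows from that of $M_{\rho,\la,\ast}(\G)$ with no further sheaf theory.
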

Note that $\G$ in the claim needs not to be congruent. 
A key is to realize automorphic forms and cusp forms as global sections of coherent sheaves on the minimal 
compactification of $X_\G$. However the minimal compactification is highly singular in general and therefore 
it seems difficult to directly construct desired coherent sheaves.  
As usual, we first consider automorphic vector bundles on $X_\G$ and then 
extend them to a suitably chosen toroidal compactification of $X_\G$. 
Among them, we lose the ampleness of 
a natural automorphic line bundle $\omega$ on any toroidal compactification in most cases. 
However we can push forward the extended automorphic vector bundles in question to  
the minimal compactification preserving the coherence and descend  $\omega$ to an ample line bundle. 
In proving these things, we often use the results from reflexive sheaves, Serre's extension theorem 
(for the classical case), 
and later its variant over integral bases due to Kai-Wen Lan. 
Then the claim follows from a standard argument for coherent sheaves on projective schemes. 
Though the claim of Theorem \ref{cla} seems to be a folklore except for some cases or even a standard result in textbooks, 
it has been missing, such a finiteness might not have been clearly documented after Cartan Seminaires.  
Some important language is maintained during the last decade by Kai-Wen Lan and his collaborators though  
the classical modular forms on Shimura varieties have been understood very well in terms of various methods including  
keywords as $(\frak g,K)$-cohomology, mixed Hodge theory and so on.      

Next we consider similar claims for integral bases. 
To define an integral structure of the space of automorphic forms or cusp forms, let us assume that 
\begin{itemize}
\item $(G,\mathcal D)$ is a Shimura datum of PEL type. 
\end{itemize} 
For each rational prime $p$, 
let us fix an isomorphism $\iota_p:\bQ_p\stackrel{\sim}{\lra} \C$ and we say a subring $R\subset \C$ is $p$-adically integral if 
$\iota^{-1}_p(R)\subset \overline{\Z}_p$. We also say any subring of $\bZp$ $p$-adically integral. Note that $\Z_{(p)}$ and $\Z_p$ are standard examples of $p$-adically integral rings. For any positive integer $N$ and a finite extension $F/\Q$, 
the ring $\mathcal{O}_F[\frac{1}{N}]$ is also a $p$-adically integral ring when $p\nmid N$. 
 
Let $p$ be a good prime and $R_1$ be a $p$-adically integral ring defined in Subsection \ref{im}. If $G=GSp_{2g}/\Q$, then 
any rational prime $p$ is good and $R_1$ can be any of $\Z_{(p)}$ and $\Z_p$.  

Then it will be explained in next section that by using the moduli interpretation of $X_{\mathcal U}$ for 
any open compact subgroup $\U=U^p U_p$  such that $U^p$ is an open compact subgroup in $G(\widehat{\Z}^p)=\ds\prod_{q\neq p}G(\Z_q)$ and $U_p=G(\Z_p)$, 
for any $R_1$-algebra $R$, we can define geometric automorphic forms or geometric cusp forms over $R$. As in the cases before, 
we define the $R$-module 
$M_{\nu_0}(\U,R)$ (resp. $S_\rho(\U,R)$) consisting of automorphic forms (resp. cusp forms) over $R$ of weight 
$\nu_0\in X^+_{M_1}$ with respect to $\U$ (see \S\ref{im} for weights).  
According to this definition, as (\ref{def1}),(\ref{def2}), we also define the graded $R$-modules 
\begin{equation}\label{sps}
M_{\nu_0,\lambda,\ast}(\U,R)=
\ds\bigoplus_{k\in \Z_{\ge 0}}M_{\nu_0+k\lambda}(\U,R),\ S_{\nu_0,\lambda,\ast}(\U,R)=
\ds\bigoplus_{k\in \Z_{\ge 0}}S_{\nu_0+k\lambda}(\U,R),
\end{equation}
and 
\begin{equation}
M_{\lambda,\ast}(\U,R)=\ds\bigoplus_{k\in \Z_{\ge 0}}M_{k\lambda}(\U,R)
\end{equation}
where $\lambda\in X^+_{M_1}$ is a positive parallel weight in the sense of Definition 7.1, p.1153 of \cite{LS-duke}. 

\begin{thm}\label{arith1}Let $p$ be a good prime and $R_1$ be as above. Assume that the symmetric space $X$ is 
a Shimura variety of PEL type. Let $\nu_0\in  X^+_{M_1}$ be a weight and $\lambda$ be a positive parallel weight.  
Then it holds that 
\begin{enumerate}   
\item the graded ring $M_{\lambda,\ast}(\U,R)$ is finitely generated over $R$; 
\item the graded modules $M_{\nu_0,\lambda,\ast}(\U,R)$ and $S_{\nu_0,\lambda,\ast}(\U,R)$ are finitely generated over 
$M_{\lambda,\ast}(\U,R)$. 
\end{enumerate}
\end{thm}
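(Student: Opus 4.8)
The plan is to reduce the statement to the standard finiteness of section rings and modules on a projective scheme, after realizing all the graded pieces as spaces of global sections of coherent sheaves on an integral minimal compactification. First I would invoke the integral models provided by the moduli interpretation of $X_\U$ over $R_1$ (for $G=\GSp_{2g}$ supplied by Chai--Faltings, and in general by Lan): a smooth integral model of $X_\U$, a projective smooth toroidal compactification $X^{\mathrm{tor}}_\U$, and the projective minimal compactification $X^{\min}_\U$, together with the proper morphism $\pi\colon X^{\mathrm{tor}}_\U\lra X^{\min}_\U$. Base changing everything along $R_1\lra R$ keeps $X^{\min}_\U$ projective over $R$. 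To the weight $\nu_0$ one attaches the automorphic vector bundle on the open model and extends it to its canonical extension $\mathcal{W}^{\mathrm{can}}_{\nu_0}$ on $X^{\mathrm{tor}}_\U$; the subcanonical extension $\mathcal{W}^{\mathrm{sub}}_{\nu_0}=\mathcal{W}^{\mathrm{can}}_{\nu_0}\otimes \mathcal{I}_\partial$, obtained by twisting with the ideal sheaf of the boundary, cuts out cusp forms. By the Koecher principle in this integral setting, $M_{\nu_0}(\U,R)=H^0(X^{\mathrm{tor}}_\U,\mathcal{W}^{\mathrm{can}}_{\nu_0})$ and $S_{\nu_0}(\U,R)=H^0(X^{\mathrm{tor}}_\U,\mathcal{W}^{\mathrm{sub}}_{\nu_0})$, with the same identifications after twisting by powers of $\la$.

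Next I would transport the problem to $X^{\min}_\U$. The positive parallel weight $\la$ defines the automorphic line bundle $\w$ on $X^{\mathrm{tor}}_\U$, which is the pullback under $\pi$ of an ample line bundle on $X^{\min}_\U$; this descent and ampleness over the integral base is exactly where the parallel-and-positive hypothesis on $\la$ is used, and it is available for PEL type through Lan's theory. Setting $\mathcal{F}_{\nu_0}:=\pi_\ast\mathcal{W}^{\mathrm{can}}_{\nu_0}$ and $\mathcal{G}_{\nu_0}:=\pi_\ast\mathcal{W}^{\mathrm{sub}}_{\nu_0}$, the higher Koecher principle of Lan guarantees that these are coherent sheaves on $X^{\min}_\U$. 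Since $\w$ on $X^{\mathrm{tor}}_\U$ is $\pi^\ast$ of the ample bundle (abusively still denoted $\w$) on $X^{\min}_\U$, the projection formula together with $H^0(X^{\mathrm{tor}}_\U,-)=H^0(X^{\min}_\U,\pi_\ast -)$ yields
$$M_{\nu_0+k\la}(\U,R)=H^0(X^{\min}_\U,\mathcal{F}_{\nu_0}\otimes \w^{\otimes k}),\quad S_{\nu_0+k\la}(\U,R)=H^0(X^{\min}_\U,\mathcal{G}_{\nu_0}\otimes \w^{\otimes k})$$
for every $k\ge 0$, and in particular $M_{k\la}(\U,R)=H^0(X^{\min}_\U,\w^{\otimes k})$ by taking $\nu_0$ trivial.

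With these identifications the two assertions become the classical statements about graded section rings and modules on a projective scheme over a Noetherian ring. For part (1), the section ring $\bigoplus_{k\ge 0}H^0(X^{\min}_\U,\w^{\otimes k})=M_{\la,\ast}(\U,R)$ of an ample line bundle on a scheme projective over the Noetherian ring $R$ is a finitely generated $R$-algebra. For part (2), for any coherent sheaf $\mathcal{H}$ on $X^{\min}_\U$ the module $\bigoplus_{k\ge 0}H^0(X^{\min}_\U,\mathcal{H}\otimes\w^{\otimes k})$ is finitely generated over that section ring; applying this to $\mathcal{H}=\mathcal{F}_{\nu_0}$ and $\mathcal{H}=\mathcal{G}_{\nu_0}$ gives the finite generation of $M_{\nu_0,\la,\ast}(\U,R)$ and $S_{\nu_0,\la,\ast}(\U,R)$ over $M_{\la,\ast}(\U,R)$. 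For a non-Noetherian $R$ one argues over $R_1$ and then base changes, which is harmless provided the formation of these section modules commutes with $R_1\lra R$.

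The \emph{main obstacle} is not the final commutative-algebra step but the construction and control of the integral geometric input feeding into it: one needs the integral toroidal and minimal compactifications with a proper $\pi$, the coherence of $\pi_\ast$ of the canonical and subcanonical extensions (the higher Koecher principle), the descent of $\w$ to a genuinely ample line bundle on $X^{\min}_\U$ over a base of mixed characteristic, and the compatibility of all of these with the base change $R_1\lra R$ so that the geometric forms over $R$ are indeed computed on $X^{\min}_\U$ after base change (a cohomology-and-base-change point requiring the relevant flatness). For $G=\GSp_{2g}$ these ingredients are furnished by Chai--Faltings, while in the general PEL case they rest on Lan's integral compactification theory \cite{Lan-van} and the integral analogue of Serre's extension theorem; checking that the hypotheses of those results hold for the chosen $R_1$ and the level $\U=U^pU_p$ with $U_p=G(\Z_p)$ at the good prime $p$ is the delicate part.
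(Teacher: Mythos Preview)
Your proposal is correct and follows essentially the same route as the paper: push the canonical and subcanonical extensions forward along the proper map $\pi$ to the integral minimal compactification, use that the positive parallel weight line bundle descends to an ample line bundle there, and then invoke the standard finiteness of section rings and modules on a projective scheme (the paper cites Lemma~16.1 of the Stacks project). One small correction: the coherence of $\pi_\ast\mathcal{W}^{\mathrm{can}}_{\nu_0}$ and $\pi_\ast\mathcal{W}^{\mathrm{sub}}_{\nu_0}$ follows simply from the properness of $\pi$, not from the higher Koecher principle.
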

Let $R_1$ be as above and $R$ be $R_1$-algebra which is $p$-adically integral in the above sense. 
For any finite $R$-module $M$ we denote by $M^{{\rm TF}}$ the maximal 
$R$-free quotient of $M$. 
We can also study a more finer structure on these $R$-modules:
\begin{cor}\label{cor1}Keep the notation in Theorem \ref{arith1}. Assume that an $R_1$-module $R$ is $p$-adically integral. Then it holds that  
\begin{enumerate}   
\item the graded ring $M_{\lambda,\ast}(\U,R)^{{\rm TF}}:=\ds\bigoplus_{k\in \Z_{\ge 0}}M_{k\lambda}(\U,R)^{{\rm TF}}$ is finitely generated over $R$; 
\item the $R$-free graded modules $M_{\nu_0,\lambda,\ast}(\U,R)^{{\rm TF}}:=
\ds\bigoplus_{k\in \Z_{\ge 0}}M_{\nu_0+k\lambda}(\U,R)^{{\rm TF}}$ and 
$S_{\nu_0,\lambda,\ast}(\U,R)
^{{\rm TF}}:=
\ds\bigoplus_{k\in \Z_{\ge 0}}S_{\nu_0+k\lambda}(\U,R)^{{\rm TF}}$ are finitely generated over 
$M_{\lambda,\ast}(\U,R)^{{\rm TF}}$.   
\end{enumerate}
Further, these objects give integral structures of the classical forms (\ref{def3}) respectively. 
\end{cor}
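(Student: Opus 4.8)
The plan is to deduce Corollary \ref{cor1} from Theorem \ref{arith1} by a purely algebraic argument about graded modules over a graded ring, combined with the structure of the torsion-free quotient functor $M\mapsto M^{\rm TF}$. Since $R$ is $p$-adically integral, it is in particular a domain (being a subring of $\C$ via $\iota_p$), so for any finite $R$-module $M$ the maximal $R$-free quotient $M^{\rm TF}$ is well defined as $M/M_{\rm tors}$, and this is an exact-on-the-right, covariant functor that commutes with direct sums. First I would observe that $A:=M_{\lambda,\ast}(\U,R)$ is a finitely generated graded $R$-algebra by Theorem \ref{arith1}(1), and the multiplication $A_j\otimes_R A_k\to A_{j+k}$ induces a well-defined multiplication on the torsion-free quotients, so that $A^{\rm TF}:=\bigoplus_{k\ge 0}A_k^{\rm TF}$ inherits the structure of a graded ring as the image of $A$ under the ring map $A\to A^{\rm TF}$ (the quotient by the homogeneous torsion ideal). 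Being a quotient of the finitely generated $R$-algebra $A$, the ring $A^{\rm TF}$ is itself a finitely generated $R$-algebra, which gives assertion (1).

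For assertion (2), I would work with the finite set of homogeneous generators of $M_{\nu_0,\lambda,\ast}(\U,R)$ and $S_{\nu_0,\lambda,\ast}(\U,R)$ as $A$-modules provided by Theorem \ref{arith1}(2). Writing $N:=M_{\nu_0,\lambda,\ast}(\U,R)$, each graded piece $N_k=M_{\nu_0+k\lambda}(\U,R)$ is a finite $R$-module, and the $A$-action descends to an $A^{\rm TF}$-action on $N^{\rm TF}:=\bigoplus_k N_k^{\rm TF}$: explicitly, for $a\in A_j$ and $x\in N_k$ the product $ax\in N_{j+k}$ is computed in $N$, and one checks that if either $a$ or $x$ maps to zero in the torsion-free quotient then $ax$ is torsion in $N_{j+k}$, so the pairing factors through $A_j^{\rm TF}\times N_k^{\rm TF}\to N_{j+k}^{\rm TF}$. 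The key point here is that $N$ is torsion-free over $R$ precisely when all associated primes avoid the torsion, and the surjection $N\twoheadrightarrow N^{\rm TF}$ of $A$-modules then becomes a surjection of $A^{\rm TF}$-modules once we mod out the $A$-action by torsion; since the images of a generating set of $N$ still generate $N^{\rm TF}$, the module $N^{\rm TF}$ is finitely generated over $A^{\rm TF}$. The identical argument applies verbatim to $S_{\nu_0,\lambda,\ast}(\U,R)$ in place of $M_{\nu_0,\lambda,\ast}(\U,R)$.

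The final sentence of the corollary asserts that these torsion-free objects give integral structures of the classical complex forms (\ref{def3}); for this I would invoke the base-change comparison underlying the definition of geometric automorphic forms, namely that $M_{\nu_0+k\lambda}(\U,R)\otimes_R\C$ recovers the classical space $M_{\rho\otimes\lambda^k}(\U)$ under $\iota_p$, together with the fact that tensoring the torsion-free quotient with the fraction field (or with $\C$) kills the torsion that was quotiented out. Concretely, $N_k^{\rm TF}\otimes_R\C=N_k\otimes_R\C$ because torsion dies under the flat base change to a field containing $R$, so $\bigoplus_k N_k^{\rm TF}\otimes_R\C$ reproduces (\ref{def3}) and exhibits $N^{\rm TF}$ as a lattice-like $R$-form.

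The main obstacle I expect is not the finite generation itself, which is formal once Theorem \ref{arith1} is granted, but rather verifying cleanly that the torsion-free quotient is compatible with the graded ring and module structures — that is, that passing to $(-)^{\rm TF}$ does not break associativity or the module axioms, and that it interacts correctly with the base change to $\C$. The subtle point is that $(-)^{\rm TF}$ is only right-exact, so one must be careful that the generating sets survive and that no new relations among generators are needed; this requires the observation that the torsion submodule is a homogeneous $A$-submodule, which follows because $R$ is a domain and the grading pieces are finite $R$-modules, so the torsion is computed degree by degree.
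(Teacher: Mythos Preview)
Your argument is correct and takes a genuinely more elementary route than the paper's. The paper first reduces to the case where $R$ is a DVR, then invokes Serre's vanishing theorem on $\mathcal{X}^{\rm min}_{\U,\kappa_R}$ together with a base-change argument from Lan--Suh to show that for all $k\gg 0$ the graded pieces $M_{\nu_0+k\lambda}(\U,R)$ and $S_{\nu_0+k\lambda}(\U,R)$ are already free over $R$; hence the torsion is concentrated in finitely many degrees, and one obtains an $R$-module splitting $M_{\nu_0,\lambda,\ast}(\U,R)=T\oplus M_{\nu_0,\lambda,\ast}(\U,R)^{\rm TF}$ with $T$ a finite torsion $R$-module, from which the corollary is read off via Theorem~\ref{arith1}. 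You bypass the geometric input entirely: once $R$ is a domain, the homogeneous torsion is automatically a graded ideal of $A=M_{\lambda,\ast}(\U,R)$ (respectively a graded $A$-submodule of $N$), so $A^{\rm TF}$ and $N^{\rm TF}$ are literally quotients of $A$ and $N$, and finite generation descends immediately. This is cleaner for the stated corollary; the paper's detour, however, yields the stronger side result that only finitely many graded pieces carry any torsion, which is of independent interest. One caveat worth making explicit: your identification $M^{\rm TF}=M/M_{\rm tors}$ presupposes that finitely generated torsion-free $R$-modules are free, which is true over a DVR but not over an arbitrary subring of $\C$; the paper makes exactly this reduction at the outset, and you should state it as well rather than leave it implicit.
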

In the course of proving the main theorems, we will use the results of Kai-Wen Lan and his collaborators. 
However, if we focus on Siegel modular forms, we will have more finer results which will be explained in 
Section 4 by using only the results in \cite{CF}.

This paper will be organized as follows. 
In Section 2 we will built up the settings precisely and prove the main theorems. 
In Section 3 we give an explicit form of positive parallel weights and good prime in the 
case when $G=GSp_{2g}$. 
In the last section, we will prove the more finer version of the finiteness results for 
Siegel modular forms of level one. 

\textbf{Acknowledgments.}
The author would like to thank 
Nobuyoshi Takahashi for the useful discussion in algebraic geometry. He would also like to thank the Professor Siegfried B\"oecherer 
and Hirotaka Kodama for pushing him to prove the  
finiteness results in this paper. In particular, the paper \cite{Ko} inspired   
the problems in this paper in more general setting. The author would like to 
express  a special thank to Prof. Kai-Wen Lan for giving 
me some comments on an earlier version of this paper. 
Finally, the author thanks the referees for their helpful remarks and corrections.

\section{Settings and results}
\subsection{Shimura data and Shimura varieties}
Let us start recalling some basic facts of Shimura varieties. We refer \cite{Lan-example} and \cite{Milne-intro}. 
Let $(G,\D)$ be a Shimura datum introduced in Section \ref{intro} where $G$ is a connected reductive group over $\Q$ and 
$\D$ is the $G(\R)$-conjugacy classes of homomorphisms 
$h:\mathbb{S}:={\rm Res}_{\C/\R}\mathbb{G}_{m,\C}\lra G_\R$ 
which satisfy the following conditions:
\begin{enumerate}
\item The adjoint action of $G(\R)$ on the complexification $\frak g$ of 
the Lie algebra ${\rm Lie}\hspace{0.5mm} G(\R)$ and $h$ yield the homomorphism 
${\rm Ad}\circ h:\mathbb{S}(\R)=\C^\times\lra {\rm Aut}_\C(\frak g)$ and it induces a decomposition 
$$\frak g=\frak k\oplus \frak g^+\oplus \frak g^-$$
such that $h(z),\ z\in \C^\times$ acts on the right hand side of the above decomposition by $1,z/\overline{z},\overline{z}/z$ respectively.
\item $h(\sqrt{-1})$ induces a Cartan involution on $G^{{\rm ad}}(\R)$ where $G^{{\rm ad}}=G/Z_{G}$ and $Z_G$ is the center of $G$. 
\item $G^{{\rm ad}}$ has no nontrivial $\Q$-simple factor $H$ such that $H(\R)$ is compact.  
\end{enumerate}
Factoring through a connected Shimura variety (see Lemma 5.11 of \cite{Milne-intro}) the set $\D$ has a 
structure as a Hermitian symmetric domain (see Proposition 4.8 of \cite{Milne-intro}). 
The third condition for Shimura data guarantees that $G^{{\rm ad}}$ is semisimple. By Proposition 4.1 of \cite{Milne-intro},  
$G(\Q)\cap \U$ is a congruence subgroup for   
any compact open subgroup $\U$ and conversely any congruence subgroup is recovered in this way. 
As explained in Section \ref{intro}, the Shimura variety
$$X_\U:=G(\Q)\bs \D\times G(\A_f)/U\simeq \coprod_{i\in I}\G_i\bs \D^+$$ has 
a structure as a quasi-projective variety over $\C$ for any open compact subgroup $\U$ of $G(\A_f)$. We may work on $X_\G=\G\bs \D^+$ for 
any congruent subgroup $\G$ of $G(\Q)^+$ such as each connected component of $X_\U$. Since $\G$ has 
a finite index normal subgroup which is neat and the cohomologies in 
question are $\C$-vector spaces, we may assume that $\G$ is neat in proving Theorem \ref{cla} (cf. the argument around the equation (2) in Chapter IV p.140 of \cite{BW}). 
For an automorphic factor $J_\rho$ in (\ref{auto-factor}) one can associate the holomorphic automorphic vector bundle 
$W_\rho$ on $X_\G$ such that $H^0(X_\G,W_\rho)\simeq M_{\rho}(\G)$ (see Chapter III of \cite{Milne-canonical} or \cite{Serre-auto}). 

To apply some results on projective varieties we need to compactify $X_\G$ and canonically extend our sheaf $W_\rho$ there. 
Under this process the condition (dim) is reasonable to make no difference between holomorphic automorphic forms and 
holomorphic global sections of the extended coherent sheave.

Let $X_{\G,\Delta}$ be a smooth toroidal compactification of $X_\G$ with respect to a 
fan $\Delta$ (cf. Chapter 
V of \cite{Milne-canonical}). 
In fact, one can choose such a fan by using good cone decompositions. 
Then there exists a suitable choice of $\Delta$ such that 
$W_\rho$ extends to a vector bundle (so called a canonical extension) $W^{\rm can}_\rho$ on $X_{\G,\Delta}$ such that 
\begin{equation}\label{hkp}
H^0(X_{\G,\Delta},W^{\rm can}_\rho)\simeq M_\rho(\G),\ H^0(X_{\G,\Delta},W^{\rm sub}_\rho)\simeq S_\rho(\G)
\end{equation}
where $W^{\rm sub}_\rho=W^{\rm can}_\rho(-D_\Delta)$ and $D_\Delta=(X_{\G,\Delta}\setminus X_{\G})_{{\rm red}}$ 
(see Theorem 6.1 of \cite{Milne-canonical} for $W^{\rm can}_\rho$ and $W^{\rm sub}_\rho$).
The isomorphisms (\ref{hkp}) for cusp forms follow from Proposition 5.4.2 of \cite{Harris-JDG} when $\G$ is a congruence subgroup and 
Theorem 4.7 of \cite{Lan-van} for general case  since 
the codimension condition is fulfilled under the assumption (dim) (see  (\ref{G3})). 
In particular as mentioned in Remark 2.4 of \cite{Lan-HKP} 
the case of Siegel modular forms of degree greater than one, or Hilbert modular forms for totally real fields of degree greater than one 
 satisfies (dim). 
The interested readers for this condition may 
consult the table in Example 3.19 of \cite{Lan-coh}. 

To work on projective varieties it would be better to use toroidal compactifications rather than the minimal 
compactification. For example the former one is smooth while the latter one is normal and in general it has 
bad singularities. Nevertheless there are some advantages to work on  the minimal compactification to naturally get 
a suitable automorphic line bundle which is ample there but not on toroidal compactifications (see Section 3.1 of 
\cite{Lan-coh}). In fact we need to study (holomorphic) automorphic bundles on the minimal compactification to 
get the finiteness results. 
 
Let $j^{{\rm min}}:X_\G\hookrightarrow X^{{\rm min}}_\G$ be the minimal compactification. 
By the assumption (\ref{G3}), the codimention of $X^{{\rm min}}_\G\setminus X_\G$ in $X^{{\rm min}}_\G$ is greater than or 
equal to 2. Therefore, it follows from Serre's extension theorem \cite{Serre-pro} that  
$j^{{\rm min}}_\ast W_\rho$ is coherent (see the proof of Theorem 10.14 of \cite{BB}) and this is a unique extension of $W_\rho$ 
to $X^{{\rm min}}_\G$. Let $\pi:X_{\G,\Delta}\lra X^{{\rm min}}_\G$ be the canonical proper surjective morphism (see the proof of 
Lemma \cite{Lan-van}). Then $\pi_\ast W^{\rm can}_\rho$ and 
$\pi_\ast W^{\rm sub}_\rho$  are both coherent since $\pi$ is proper.
It follows from (\ref{hkp}) that 
\begin{equation}\label{mini-auto}
H^0(X^{{\rm min}}_\G, \pi_\ast W^{{\rm can}}_\rho)\simeq M_\rho(\G)
\end{equation} 
and 
\begin{equation}\label{mini-cusp}
H^0(X^{{\rm min}}_\G,\pi_\ast W^{\rm sub}_\rho)\simeq S_\rho(\G).
\end{equation}
Therefore, we have two coherent sheaves on the normal projective variety $X^{{\rm min}}_\G$ 
which give rise to automorphic forms and cusp forms respectively.  

Now we are ready to prove Theorem \ref{cla}. 
\begin{proof}
Assume that $\lambda$ is a positive character of rational type. 
Then by Theorem 1 of \cite{Satake-some} and Lemma 3.2 of \cite{Lan-van} the line bundle $W^{\rm can}_\lambda$ descend 
to the ample line bundle $\omega_\lambda$ which is nothing but $j^{{\rm min}}_\ast W_\lambda$. 
In fact since $\pi$ is proper birational and $ X^{{\rm min}}_\G$ is normal, 
by Zariski main theorem, $\pi_\ast \O_{X_{\G,\Delta}}=\O_{X^{{\rm min}}_\G}$. 
It follows from the projection formula that $\pi_\ast W^{\rm can}_\lambda=\pi_\ast \pi^\ast \omega_\lambda=\omega_\lambda$. 
Since  $\omega_\lambda$ and $j^{{\rm min}}_\ast W_\lambda$ are reflexive, and  
$\omega_\lambda |_{X_{\G}}=(\pi_\ast W^{\rm can}_\lambda)|_{X_{\G}}=W_\lambda=(j^{{\rm min}}_\ast W_\lambda)|_{X_{\G}},$
we have 
\begin{equation}\label{key}
\pi_\ast W^{\rm can}_\lambda=\omega_\lambda=j^{{\rm min}}_\ast W_\lambda
\end{equation} by Proposition 1.6, p.126 of \cite{H80} or Theorem 3, p.817 of \cite{Ghitza}.  
This is a key ingredient regarding the following cohomological description of automorphic forms and cusp forms. 

Since $H^0(X^{{\rm min}}_\G,j^{{\rm min}}_\ast W_{\lambda^k})\simeq M_{\lambda^k}(\G)$ for non-negative integer $k$, 
the graded ring $M_{\lambda,\ast}(\G)\simeq \bigoplus_{k\in \Z_{\ge 0}}H^0(X^{{\rm min}}_\G,j^{{\rm min}}_\ast W_{\lambda^k})$ 
is finitely generated by Lemma 16.1 of \cite{Stacks}. Similarly since 
\begin{equation}\label{isom1}
M_{\rho,\lambda,\ast}(\G)\simeq \bigoplus_{k\in \Z_{\ge 0}}H^0(X^{{\rm min}}_\G,
\pi_\ast W^{{\rm can}}_\rho\otimes 
(j^{{\rm min}}_\ast W_{\lambda})^{\otimes k}) 
\end{equation} by (\ref{mini-auto}), (\ref{key}) and  
\begin{equation}\label{isom2}
S_{\rho,\lambda,\ast}(\G)\simeq \bigoplus_{k\in \Z_{\ge 0}}H^0(X^{{\rm min}}_\G,\pi_\ast W^{\rm sub}_\rho\otimes 
(j^{{\rm min}}_\ast W_\lambda)^{\otimes k})
\end{equation}
by (\ref{mini-cusp}), (\ref{key}), 
the claim for these graded vector spaces follows from Lemma 16.1-(5) of \cite{Stacks}.   
\end{proof}

\subsection{Integral models}\label{im} 
In this subsection we impose the following condition to work on $p$-adically integral bases:
\begin{equation}\label{pel}
\mbox{(PEL) $(G,\D)$ is a Shimura datum of PEL type.}
\end{equation}
We refer \cite{L-book}, \cite{LS-rel}, \cite{LS-adv}, \cite{LS-duke}, \cite{LS-lift} and also \cite{Lan-geo}. 
Since the notations in \cite{LS-adv},\cite{LS-duke} would be heavy for most readers, 
we recall the results in  \cite{LS-rel} quickly and avoid explaining in detail. Instead we give a few examples 
which would be enough for applications to many interesting cases. 
By classification any irreducible factor of $(G,\D)$ is of type A, C, or D.  
According to Section 1.1 of \cite{LS-duke}, let us consider 
an integral PEL datum $(\mathcal{O},\star,L,\langle \ast,\ast \rangle,h_0 )$  in the following sense:
\begin{enumerate}\label{def-pel}
\item $\mathcal{O}$ is an order in a non-zero semisimple algebra, finite dimensional over $\Q$ after 
tensoring with $\Q$, 
together with a positive involution $\star$;
\item $(L,\langle \ast,\ast \rangle,h_0)$ is a PEL type $\mathcal{O}$-lattice 
(a polarized symplectic lattice in other word, cf. Definition 1.2.1.2 of \cite{L-book})  
\end{enumerate}
Let $F$ be the center of $\mathcal{O}\otimes_\Z\Q$ which is a product of number fields. 
Then we define for any $\Z$-algebra $R$, 
$$G(R):=\{(g,r)\in {\rm GL}_{\mathcal{O}\otimes_\Z R}(L\otimes_\Z R)
\times \mathbb{G}_m(R)\ |\ \langle gx,gy \rangle=r \langle x,y \rangle,\ 
\forall x,y\in L\otimes_\Z R \}.$$
As explained in Remark 1.2.1.8, the group functor is not necessarily 
a smooth functor over $\Z$ but an affine group scheme over $\Z$. 
However one can easily check that $G_\Q$ is a smooth reductive group over $\Q$ and 
it is also connected because of the similitude character. 

The polarization $h_0:\C\lra {\rm End}_{\O\otimes_\Z R}(L\otimes_\Z\R)$ defines a Hodge structure of 
weight $-1$ with Hodge decomposition $L\otimes_\Z\C=V_0\oplus V^c_0$ as a $\O\otimes_\Z\C$-module, 
such that $h_0(z)$ acts as $1\otimes z$ on $V_0$ and as $1\otimes z^c$ on $V^c_0$. 
Here superscript ``$c$" stands for the complex conjugation. Let $F_0$ be the reflex field 
defined by the $\O\otimes_\Z\C$-module $V_0$ (see Section 1.2.5.4, p.51 of \cite{L-book}). 
For instance, $F_0=\Q$ if $G={\rm Res}_{F/\Q}GSp_{2n,F}$ for a totally real field $F$ and in general 
$F_0$ is a subfield of the Galois closure of $K$ if  $G$ is the unitary similitude group $GU(p,q)=GU(p,q)(K/F)$ for 
a CM extension $K/F$ but $F_0=\Q$ if we further assume $p=q$. 
To be more precise when $K/\Q$ is an imaginary quadratic extension, $F_0=K$ unless $p=q$ (see 
Chapter III, Section 1, p.143 of \cite{Harris-unitaryI}).  
 For Example 5.24, p.312 and Example 12.4-(d), p. 344 (which is related to Shimura curves) of \cite{Milne-intro} we have $F_0=v(F)$ when 
$I_{{\rm nc}}=\{v\}\subset {\rm Hom}_{\Q}(F,\R)$ 
for a totally real field (see \cite[Example 5.24, p.312]{Milne-intro} for the symbol $I_{{\rm nc}}$). 

Let ${\rm Diff}^{-1}$ be the inverse difference of $\O/\Z$ and put ${\rm Disc}=[{\rm Diff}^{-1}:\O]$ 
(see (1.1.1.17), p.4 of \cite{L-book}). 
We say a rational prime $p$ is good if it satisfies
\begin{enumerate}
\item $p\nmid {\rm Disc}$;
\item $p\neq 2$ if $\O_\Z\otimes\Q$ involves a simple 
factor of type $D$, in the sense of Definition 1.2.1.15, p.31 of \cite{L-book}; 
\item the pairing $\langle \ast,\ast \rangle$ is perfect after the base change to $L\otimes_\Z\Z_p$. 
This is equivalent to ask if $p\nmid [L^\sharp:L]$ for the dual lattice $L^\sharp$ of $L$.  
\end{enumerate}
For $GSp_{2n}/F$ or $GU(n,n)=GU(n,n)(K/F)$ (the similitude unitary groups for a CM extension $K/F$), Disc is nothing but the discriminant of $F$ or $K$ and 
$p$ is a good prime if and only if $p$ is unramified in $F$ or $K$ respectively. 

By Lemma 1.2.5.9, p.52 of \cite{L-book}, there exists a finite extension $F'_0$ of $F_0$ in $\C$, unramified at $p$, 
together with an $\O\otimes_{\Z}\O_{F'_0,(p)}$-module $L_0$ such that $L_0\otimes_{\O_{F'_0,(p)}}\C\simeq 
V_0$ as a $\O\otimes_\Z\C$-module. Here $\O_{F'_0,(p)}$ stands for the localization of $\O_{F'_0}$ with respect to  
the ideal $(p)$. 
One can easily find $F'_0$ out from the statement or the proof of above lemma. 
For instance, if $F$ or $K$ is Galois for an integral PEL datum in the case of   
$GSp_{2n}/F$ or $GU(n,n)(K/F)$ as above, then $F'_0=F_0$. 

For a good prime $p$ and $F'_0$, put $W_0=L_0\oplus L^\vee_0(1)$ and let us denote by $\langle \ast,\ast \rangle_{{\rm can}}:W_0
\times W_0\lra \O_{F'_0,(p)}(1)$ the alternating pairing defined in Lemma 1.1.4.13, p.20 of \cite{L-book}. 
This is an integral structure of Hodge decomposition $L\otimes_\Z\C=V_0\oplus V^c_0$. 
We define an integral model of $G$ over $\O_{F'_0,(p)}$ as follows. 
For any $\O_{F'_0,(p)}$-algebra $R$ set 
$$G_0(R)=\{(g,r)\in GL_{\O\otimes_\Z R}(W_0\otimes_{\O_{F'_0,(p)}}R)\times \mathbb{G}_m(R)\ |\ 
\langle gx,gy \rangle_{{\rm can}}=r \langle x,y \rangle,\ \forall x,y \in 
W_0\otimes_{\O_{F'_0,(p)}}R \}.$$
Similarly the Siegel parabolic subgroup $P_0$ of $G_0$ and its Levi factor $M_0$ which are both defined over $\O_{F'_0,(p)}$ 
are given in Definition 1.4 of \cite{LS-duke}. 
By line -14, p.1117 of \cite{LS-duke} there exists a discrete valuation ring $R_1$ over $\O_{F'_0,(p)}$ 
satisfies the conditions (1),(2),(3) there. This relates the original $\langle \ast,\ast \rangle$ with 
$\langle \ast,\ast \rangle_{{\rm can}}$ over $R_1$. Hence $G_0\times_{\O_{F'_0,(p)}}R_1\simeq G_{R_1}.$
This is necessary to define an integral automorphic vector bundle over $R_1$ which will be revealed later on. 
As for $R_1$, one can take $R_1$ to be the localization of $\O_{F'_0,(p)}$  at a prime ideal dividing $(p)$ 
when $GSp_{2n}/F$ or $GU(n,n)(K/F)$ as above. Hence for a prime ideal  $v$ dividing $(p)$ in 
$\O_{F'_0,(p)}=\O_{F,(p)}$ or $\O_{K,(p)}$, one can take $R_1=\O_{F,(v)}$ or $\O_{K,(v)}$ respectively. 
We can also consider its $v$-adic completion $\O_{F,v}$ or $\O_{K,v}$ as $R_1$. In particular if $G=GSp_{2n}/\Q$, then 
$R_1$ can be $\Z_{(p)}$ or $\Z_p$. 

Let us fix $R_1$ and set $$G_1:=G_0\times_{\O_{F'_0,(p)}}R_1,\ P_1:=P_0\times_{\O_{F'_0,(p)}}R_1,\ M_1:=M_0\times_{\O_{F'_0,(p)}}R_1.$$ 

Since the polarization $h_0$ is a $\R$-algebra homomorphism, it is determined by $h_0(\sqrt{-1})$ and it also defines an 
element in $G(\R)$ by Definition 1.2.1.2-1 of \cite{L-book}. Hence $(G,G(\R)h_0)$ define a Shimura datum where 
$G(\R)$ acts on $h_0$ by conjugation. As in the previous subsection it gives rise to the 
Shimura variety $X_\U$ for any $\U=U^pU_p$ where $U^p$ is an open compact subgroup of $G(\widehat{\Z}^p)$ and 
$U_p=G(\Z_p)$. Here $\widehat{\Z}^p=\ds\varprojlim_{N,\ p\nmid N}\Z/N\Z$. 
Assume that $U^p$ is neat. As in Section 1.2 of \cite{LS-duke}, the PEL-datum $(\O,\star,L,
\langle \ast,\ast \rangle,h_0)$ and $U^p$ define a moduli problem $M_{U^p}$ over $S_0:={\rm Spec}\hspace{0.5mm}\O_{F_0,(p)}$, 
parameterizing tuples $(A,\lambda,i,\alpha_{U^p})$ over $S_0$-schemes $S$ of the following form:
\begin{enumerate}
\item $A\lra S$ is an abelian scheme;
\item $\lambda:A\lra A^\vee$ is a polarization of degree prime to $p$;
\item $\underline{{\rm Lie}}_{A/S}$ with its $\O\otimes_\Z \Z_{(p)}$-module structure given naturally by $i$ 
satisfies the (Kottwitz) determinantal condition in Definition 1.3.4.1, p.69 of \cite{L-book};
\item $\alpha_{U^p}$ is an integral level $U^p$-structure of $(A,\lambda,i)$ of type $(L\otimes_\Z \widehat{\Z}^p,
\langle \ast,\ast \rangle)$ as in Definition 1.3.6.2,\ p.72 of \cite{L-book}.  
\end{enumerate}  
Then by Theorem 1.4.1.12, p.82 and Corollary 7.2.3.10, p.461 of \cite{L-book}, the moduli problem 
$M_{U^p}$ is represented by a smooth quasi-projective scheme over $S_0$. By Section 2 of \cite{L-comp} 
there is a canonical open and closed immersion 
$X_{\U}\hookrightarrow M_{U^p}\otimes_{\O_{F_0,(p)}}F_0$ which is defined over $F_0$. 
Let $\mathcal{X}_{\U}$ be the schematic closure of $X_{\U}$ in $M_{U^p}$ (it is written by $M_{\mathcal{H},0}$ for 
$\mathcal{H}=U^p$ in the notation of \cite{LS-duke}). 
By Proposition 4.2, p.250 of \cite{LS-adv} $\mathcal{X}_{\U}$ admits a toroidal compactification 
$\mathcal{X}^{{\rm tor}}_{\U}=\mathcal{X}^{{\rm tor}}_{\U,\Sigma}$, a scheme projective and smooth over $S_0={\rm Spec}\hspace{0.5mm}R_1$ 
depending on a cone decomposition $\Sigma$.  
  
In what follows we refer Section 1, 2 of \cite{LS-duke} for (integral) automorphic vector bundles and we 
follow the notation there.  
For any $\nu_0\in X^+_{M_1}$ and any $R_1$-algebra $R$ we can define the automorphic vector bundle $\underline{W}_{\nu_0,R}$ over $X_{\U}$. By Lemma 1.18 of \cite{LS-duke} and by definition it is locally free sheaf on $X_{\U}$.  
As explained in Section 4 of \cite{LS-adv} we can define the canonical extension $\underline{W}^{{\rm can}}_{\nu_0,R}$ 
and subcanonical extension $\underline{W}^{{\rm sub}}_{\nu_0,R}=
\underline{W}^{{\rm can}}_{\nu_0,R}\otimes \mathcal{I}_D$ where $\mathcal{I}_D$ is the $\O_{\mathcal{X}^{{\rm tor}}_{\U}}$-ideal 
defining relative Cartier Divisor $(\mathcal{X}^{{\rm tor}}_{\U}\setminus \mathcal{X}_{\U})_{{\rm red}}$.   
   
Then the space of geometric automorphic forms (resp. geometric cusp forms) over $R$ of weight $v_0$ with respect to $\U$ are defined by 
\begin{equation}\label{geo}
M_{v_0}(\U,R):=H^0(\mathcal{X}^{{\rm tor}}_{\U,R},\underline{W}^{{\rm can}}_{\nu_0,R}),\ 
S_{v_0}(\U,R):=H^0(\mathcal{X}^{{\rm tor}}_{\U,R},\underline{W}^{{\rm sub}}_{\nu_0,R}).
\end{equation} 
Let $\lambda \in X^+_{M_1}$ be a positive parallel weight in the sense of Definition 7.1, p.1153 of \cite{LS-duke}. 
Then define the graded vector space of geometric automorphic forms 
\begin{equation}\label{geo-auto}
M_{v_0,\lambda,\ast}(\U,R):=\bigoplus_{k\in \Z_{\ge 0}}H^0(\mathcal{X}^{{\rm tor}}_{\U,R},\underline{W}^{{\rm can}}_{\nu_0+k \lambda,R})
=\bigoplus_{k\in \Z_{\ge 0}}H^0(\mathcal{X}^{{\rm tor}}_{\U,R},\underline{W}^{{\rm can}}_{\nu_0,R}
\otimes_{\O_{\mathcal{X}^{{\rm tor}}_{\U,R}}}(\underline{W}^{{\rm can}}_{\lambda,R})^{\otimes k})
\end{equation}
and the graded vector space of geometric cusp forms 
\begin{equation}\label{geo-cusp}
S_{v_0,\lambda,\ast}(\U,R):=\bigoplus_{k\in \Z_{\ge 0}}H^0(\mathcal{X}^{{\rm tor}}_{\U,R},\underline{W}^{{\rm sub}}_{\nu_0+k \lambda,R})
=\bigoplus_{k\in \Z_{\ge 0}}H^0(\mathcal{X}^{{\rm tor}}_{\U,R},\underline{W}^{{\rm sub}}_{\nu_0,R}
\otimes_{\O_{\mathcal{X}^{{\rm tor}}_{\U,R}}}(\underline{W}^{{\rm can}}_{\lambda,R})^{\otimes k}).
\end{equation}
We also define 
$M_{\lambda,\ast}(\U,R)=M_{\textbf{0},\lambda,\ast}(\U,R)$ and 
$S_{\lambda,\ast}(\U,R)=S_{\textbf{0},\lambda,\ast}(\U,R)$ 
where $\textbf{0}$ stands for the trivial element in $X^+_{M_1}$.   
  
As in the classical case, we try to relate theses spaces with coherent sheaves on the minimal compactification. 
An algebraic model $\mathcal{X}^{{\rm min}}_{\U}$ of the minimal compactification $X^{{\rm min}}_{\U}$ is constructed in 
Chapter 7 of \cite{L-book} and it is a normal projective scheme over ${\rm Spec}\hspace{0.5mm}\O_{F_0,(p)}$ together with 
a proper surjective birational morphism $\pi:\mathcal{X}^{{\rm tor}}_{\U}\lra \mathcal{X}^{{\rm min}}_{\U}$ which commutes with 
the natural embedding $j^{{\rm min}}:\mathcal{X}_\U\lra \mathcal{X}^{{\rm min}}_{\U}$ and 
$j^{{\rm tor}}:\mathcal{X}_\U\lra \mathcal{X}^{{\rm min}}_{\U}$. 
It is well-known that the line bundle $\underline{W}_{\lambda,R}$ is obtained by the pullback of 
an ample line bundle $L_{\lambda,R}$ on $\mathcal{X}^{{\rm min}}_{\U,R}$ via $\pi$ (see Section 2A of \cite{LS-rel}). 
Now we are ready to prove Theorem \ref{arith1}. The situation is easier than the classical case. 
\begin{proof}
It follows from the definition of direct images of sheaves that 
\begin{equation}\label{geo-auto2}
M_{v_0,\lambda,\ast}(\U,R)=
\bigoplus_{k\in \Z_{\ge 0}}H^0(\mathcal{X}^{{\rm min}}_{\U,R},\pi_\ast \underline{W}^{{\rm can}}_{\nu_0,R}
\otimes_{\O_{\mathcal{X}^{{\rm min}}_{\U,R}}}L_{\lambda,R}^{\otimes k})
\end{equation} 
and  
\begin{equation}\label{geo-cusp2}
S_{v_0,\lambda,\ast}(\U,R)
=\bigoplus_{k\in \Z_{\ge 0}}H^0(\mathcal{X}^{{\rm min}}_{\U,R},\pi_\ast \underline{W}^{{\rm sub}}_{\nu_0,R}
\otimes_{\O_{\mathcal{X}^{{\rm min}}_{\U,R}}}L_{\lambda,R}^{\otimes k}).
\end{equation} 
Notice that  $\pi_\ast \underline{W}^{{\rm can}}_{\nu_0,R}$ and $\pi_\ast \underline{W}^{{\rm sub}}_{\nu_0,R}$ 
are coherent, since $\pi$ is proper. The claim follows from Lemma 16.1-(1),(5) of \cite{Stacks}.   
\end{proof}

Next we give a proof of Corollary \ref{cor1}. 
\begin{proof}We may assume that $R$ is a DVR by flat base change. 
Let $\kappa_R=R/m_R$ where $m_R$ is the maximal ideal of $R$. 
By Serre's vanishing theorem, for any $i>0$ and $k\gg 0$  
$$H^i(\mathcal{X}^{{\rm min}}_{\U,\kappa_R},\pi_\ast \underline{W}^{{\rm can}}_{\nu_0,\kappa_R}
\otimes_{\O_{\mathcal{X}^{{\rm min}}_{\U,\kappa_R}}}L_{\lambda,R}^{\otimes k})=0$$
and 
$$H^i(\mathcal{X}^{{\rm min}}_{\U,\kappa_R},\pi_\ast \underline{W}^{{\rm sub}}_{\nu_0,\kappa_R}
\otimes_{\O_{\mathcal{X}^{{\rm min}}_{\U,\kappa_R}}}L_{\lambda,R}^{\otimes k})=0.$$
For such a $k$, the argument in the proof of Corollary 4.3, p.1877 of \cite{LS-lift} shows that 
$M_{\nu_0+k\lambda}(\U,R)$ and $S_{\nu_0+k\lambda}(\U,R)$ are free over $R$. 
Take any non-negative integer $k$ such that $\nu_0+k\lambda$ satisfies the above vanishing for higher cohomology.  
The product induces a paring 
$$M_{\nu_0}(\U,R)\times M_{k\lambda}(\U,R)\lra M_{\nu_0+k\lambda }(\U,R),\ (f,g)\mapsto f\cdot g.$$
Since $M_{\nu_0+k\lambda }(\U,R)$ is torsion free, this paring factors through 
$M_{\nu_0}(\U,R)^{{\rm TF}}\times M_{k\lambda}(\U,R)^{{\rm TF}}$. 
It is the same for cusp forms. 
 Hence we have the decomposition 
$$M_{v_0,\lambda,\ast}(\U,R)=T_1\oplus\Big(\bigoplus_{k\in \Z_{\ge 0}} M_{v_0+k\lambda}(\U,R)^{{\rm TF}}\Big),\ S_{v_0,\lambda,\ast}(\U,R)=T_2\oplus\Big(\bigoplus_{k\in \Z_{\ge 0}} S_{v_0+k\lambda}(\U,R)^{{\rm TF}}\Big),$$ 
and $M_{\lambda,\ast}(\U,R)=T_3\oplus\Big(\bigoplus_{k\in \Z_{\ge 0}} M_{k\lambda}(\U,R)^{{\rm TF}}\Big)$ 
where $T_1,T_2,T_3$ are torsion $R$-modules which are 
also finitely generated over $R$. Hence the claim follows from 
Theorem \ref{arith1}. 
\end{proof}

\section{An application to Siegel modular forms}\label{app} 
Let us consider $G=GSp_{2g}/\Z$ with the similitude character $\nu:G\lra \mathbb{G}_m$. Its derived group 
$G^{{\rm der}}=Sp_{2g}={\rm Ker}(\nu)$ is a semisimple reductive group scheme of type $(C)$. 
It naturally gives a Shimura datum of a PEL type (see Subsection 3.1 of \cite{Lan-example}). 
Put $\G={\rm Sp}_{2g}(\Z)$ and for any positive integer $N$, 
we denote by $\G(N)$ be the principal congruence subgroup of level $N$. 
Let $K(N)$ be the open compact subgroup of $G(\widehat{\Z})$ consisting of all elements which are 
congruent to the identity element modulo $N$. It follows that $K(N)\cap {\rm Sp}_{2g}(\Q)=\G(N)$ and $K(N)$ is neat if $N\ge 3$.   
Any element $\nu_0$ of $X^+_{M_1}$ 
can be written by a tuple $((k_1,\ldots,k_g);k_0))$ where  $k_1\ge k_2\ge\cdots \ge k_g$ and $k_0$ are integers. 
In view of the application here the last entry $k_0$ is unnecessary and it will play an important role when 
we relate classical forms with adelic forms though we do not pursue it. 
Then we have that 
\begin{enumerate}
\item any rational prime $p$ is good;  
\item $R_1$ can be any of $\Z_{(p)}$ and $\Z_p$; 
\item any positive parallel weight can be represented by $k_1=\cdots=k_g\ge 1$ (see Lemma 3.49, p.13 of \cite{Lan-van}).  
\end{enumerate}
For any $\underline{k}=(k_1,\ldots,k_g)\in \Z^g$ satisfying $k_1\ge \cdots \ge k_g$ and an 
arithmetic subgroup $\G$ of ${\rm Sp}_{2g}(\Q)$, 
we denote by $M_{\underline{k}}(\G)$ (resp. $S_{\underline{k}}(\G)$) the space of Siegel modular forms 
(resp. Siegel cusp forms) of 
weight $\underline{k}$ with respect to $\G$. 
Assume that $\G$ contains $\G(M)$ for some $M$ as a finite index subgroup and 
put $d_M:=[\G:\G(M)]$. 
Since $M_{\underline{k}}(\G)\subset M_{\underline{k}}(\G(M))$ and $S_{\underline{k}}(\G)\subset S_{\underline{k}}(\G(M))$, 
by using $q$-expansion principle (cf. Theorem 2 of \cite{Ichikawa}), 
for any subring $R$ of $\C$, we define the space 
$M_{\underline{k}}(\G,R)$ consisting of all Siegel modular forms over $\C$ whose Fourier coefficients at the cusp with respect to 
the Siegel parabolic subgroup are defined over $R$. Similarly we can define $S_{\underline{k}}(\G,R)$. 
\begin{thm}Let $\uk$ be as above. Let $K$ be an open compact subgroup of $G(\widehat{\Z})$ such that 
$\nu(K)=\widehat{\Z}^\times$. Put $\G_K={\rm Sp}_{2g}(\Q)\cap K$ and 
assume $\G_K$ contains $\G(M)$ for some positive integer $M\ge 3$. Let $p$ be a 
rational prime 
which never divides $d_M$. Let $R_1$ be $\Z_{(p)}$ or $\Z_p$. 
Then, it holds that  
\begin{enumerate}
\item $M_\ast(\G_K,R_1):=\ds\bigoplus_{k\in \Z_{\ge 0}}M_{k\textbf{1}}(\G,R_1)$ is finitely generated over $R_1$; 
\item $M_{\underline{k},\ast}(\G_K,R_1):=\ds\bigoplus_{k\in \Z_{\ge 0}}M_{\underline{k}+k\textbf{1}}(\G_K,R_1)$ 
and $S_{\underline{k},\ast}(\G_K,R_1):=\ds\bigoplus_{k\in \Z_{\ge 0}}M_{\underline{k}+k\textbf{1}}(\G_K,R_1)$ are 
finitely generated over $M_\ast(\G_K,R_1)$. 
\end{enumerate}
\end{thm}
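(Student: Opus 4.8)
The plan is to bootstrap from the integral finiteness already established in Theorem~\ref{arith1} for a deep principal congruence level, and then to descend to $\G_K$ along a finite group whose order is prime to $p$, the averaging over which is legitimate precisely because $p\nmid d_M$. First I would fix the auxiliary level. Since $\nu(K)=\widehat{\Z}^\times$, strong approximation for ${\rm Sp}_{2g}$ shows that $X_K$ is connected, so $X_K=\G_K\bs\D^+$ with $\D^+$ the Siegel upper half space. By hypothesis choose $M\ge 3$ with $K(M)\subseteq K$ and $p\nmid d_M$; since $p\mid d_M$ whenever $p\mid M$, this forces $p\nmid M$. Then $\G(M)=K(M)\cap {\rm Sp}_{2g}(\Q)\subseteq \G_K\subseteq {\rm Sp}_{2g}(\Z)=\G$, and $\G(M)$ is normal in $\G$, hence in $\G_K$, as a principal congruence subgroup. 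Consequently $H:=\G_K/\G(M)$ is a finite group with $|H|\mid [\G:\G(M)]=d_M$, so $|H|$ is prime to $p$ and therefore invertible in $R_1\in\{\Z_{(p)},\Zp\}$.

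Next I would apply Theorem~\ref{arith1} at level $M$. As $p\nmid M$ and $M\ge 3$, the subgroup $\U_0:=K(M)^p\,G(\Z_p)$ has neat prime-to-$p$ part $U^p=K(M)^p$ and full level $U_p=G(\Z_p)$ at $p$, so it satisfies the running hypotheses of Subsection~\ref{im}. The identity component of $X_{\U_0}$ is exactly $\G(M)\bs\D^+$, and the $q$-expansion principle identifies the geometric $R_1$-integral spaces on this component with the $q$-expansion-integral spaces $M_{\uk}(\G(M),R_1)$ and $S_{\uk}(\G(M),R_1)$ of Subsection~\ref{SMF}. Since $X_{\U_0}$ has finitely many components, its graded ring and graded modules are finite products of the corresponding objects of the components, so projecting onto the $\G(M)$-factor and invoking Theorem~\ref{arith1} yields that $M_\ast(\G(M),R_1)$ is finitely generated over $R_1$ and that $M_{\uk,\ast}(\G(M),R_1)$ and $S_{\uk,\ast}(\G(M),R_1)$ are finitely generated over it.

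It then remains to pass to $\G_K$ by taking $H$-invariants. Because $\G(M)\trianglelefteq \G_K$, a form of level $\G(M)$ descends to $\G_K$ exactly when it is $H$-invariant; as $H\subseteq {\rm Sp}_{2g}(\Z)$ acts through integral transformations on $q$-expansions, this identifies $M_{\uk}(\G_K,R_1)=M_{\uk}(\G(M),R_1)^H$ compatibly with the $R_1$-structure, and likewise for cusp forms. With $|H|$ invertible in $R_1$, the Reynolds operator $e=|H|^{-1}\sum_{h\in H}h$ is an $R_1$-linear projector onto invariants. Hence $M_\ast(\G_K,R_1)=M_\ast(\G(M),R_1)^H$ is finitely generated over $R_1$ by Noether's finiteness theorem for invariants, which gives $(1)$. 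For $(2)$, the ring $M_\ast(\G(M),R_1)$ is module-finite over its invariant subring $M_\ast(\G_K,R_1)$, so the finitely generated $M_\ast(\G(M),R_1)$-modules $M_{\uk,\ast}(\G(M),R_1)$ and $S_{\uk,\ast}(\G(M),R_1)$ are finitely generated over $M_\ast(\G_K,R_1)$ as well; applying $e$ exhibits $M_{\uk,\ast}(\G_K,R_1)$ and $S_{\uk,\ast}(\G_K,R_1)$ as direct summands of these, whence they are finitely generated over $M_\ast(\G_K,R_1)$.

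The main obstacle I anticipate is purely integral, and is concentrated in two compatibilities: identifying the geometric $R_1$-integral forms of Subsection~\ref{im} on the level-$M$ component with the $q$-expansion-integral forms of Subsection~\ref{SMF} (the $q$-expansion principle, available here from \cite{CF}), and checking that the formation of $H$-invariants is compatible with the $R_1$-lattice and commutes with reduction modulo $p$. Both hinge on $|H|$ being invertible in $R_1$, i.e. on the hypothesis $p\nmid d_M$; without it the Reynolds projector disappears, invariants need not commute with base change, and $p$-torsion could obstruct both the integral comparison and the finite generation.
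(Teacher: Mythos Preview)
Your proposal is correct and follows essentially the same route as the paper: reduce to principal level $K(M)$ where Theorem~\ref{arith1}/Corollary~\ref{cor1} applies, then descend along the finite quotient of order prime to $p$ via averaging. The paper's proof is a three-line invocation of Corollary~\ref{cor1} after writing $M_{\underline{k}+k\mathbf{1}}(\G_K,R_1)=M_{\underline{k}+k\mathbf{1}}(K(M),R_1)^{K/K(M)}$; you unpack the same argument with the Reynolds projector, Noether's invariant-ring finiteness, and the module-finiteness of $M_\ast(\G(M),R_1)$ over its $H$-invariants, and you make explicit the $q$-expansion comparison that the paper leaves implicit in its citation of Corollary~\ref{cor1} (whose torsion-free quotients are exactly what match the $q$-expansion lattices).
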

\begin{proof}Let us consider the finite group $G=K/K(M)$ whose cardinality is coprime to $p$ by assumption. 
Then we have $M_{\underline{k}+k\textbf{1}}(\G_K,R_1)=M_{\underline{k}+k\textbf{1}}(K(M),R_1)^G$ and it is the same for cusp forms. 
The claim follows from Corollary \ref{cor1}. 
\end{proof}


\section{Another classical setting}
In this section, we will discuss the previous claims for the Siegel modular forms of level one by using only Chai-Faltings's results in \cite{CF}. 
Let us keep the notation in the previous section. Assume that $g\ge 2$. 
Let $k_1\ge \cdots \ge k_g$ be integers. Put $\uk=(k_1,\ldots,k_g)$. 
Let $\rho=\rho_{\uk}:GL_g(\C)\lra {\rm Aut}_\C(V_\rho)$ be a unique irreducible representation with 
the highest weight $\uk$. In terms of classical language, as in \cite{vG} we can define the 
space $M_\rho({\rm Sp}_{2g}(\Z))$ (resp. $S_\rho({\rm Sp}_{2g}(\Z))$) consisting of Siegel modular forms on 
the Siegel upper half space  $\mathbb{H}_g$ 
(resp. Siegel cusp forms) of weight $\rho$ with respect to ${\rm Sp}_{2g}(\Z)$. 
It follows from  Theorem 2.3-(2) of \cite{CF} that the graded ring 
$M_{\ast}({\rm Sp}_{2g}(\Z),\Z):=\ds\bigoplus_{k\in \Z_{\ge 0}}M_{k\textbf{1}}({\rm Sp}_{2g}(\Z),\Z)$ 
is finitely generated over $\Z$. Here $M_{k\textbf{1}}({\rm Sp}_{2g}(\Z),\Z)$ is the subspace of 
$M_{k\textbf{1}}({\rm Sp}_{2g}(\Z))$ consisting of all forms with integral Fourier coefficients. 

There is no canonical way to define geometric Siegel modular forms of level one. 
To detour this issue we can apply the results in \cite{CF} in terms of stacks. 
However to save notation and to avoid using much of stacks, we work on schemes and use reflexive-ness of some coherent sheaves related to Siegel modula forms.  
The coarse moduli $\mathcal{A}_g={\rm Sp}_{2g}(\Z))\bs\mathbb{H}_g$ is not a complex manifold but an orbifold since ${\rm Sp}_{2g}(\Z)$ has non-trivial torsions. 
However there is a model $\mathcal{A}_{g,\Z}$ which is a quasi-projective normal scheme over $\Z$
such that $\mathcal{A}_{g,\Z}(\C)\simeq \mathcal{A}_g$ as an analytic space  (see \cite[Theorem 2.3, p.150]{CF}). 
By \cite[Theorem 2.3, p.150]{CF} again, there also exists a canonical compactification  $\mathcal{A}^{{\rm min}}_{g,\Z}$  of  
$\mathcal{A}_{g,\Z}$ (which is so called 
the minimal compactification) and an ample line bundle $\mathcal{L}$ on it. 
As claimed there, $\mathcal{A}^{{\rm min}}_{g,\Z}$ is a projective normal scheme over $\Z$. 
Let $\mathcal{A}^{{\rm reg}}_{g,\Z}$ be the regular locus of $\mathcal{A}_{g,\Z}$. 
Since $\mathcal{A}^{{\rm min}}_{g,\Z}$ is normal and the codimension of 
$\mathcal{A}^{{\rm min}}_{g,\Z}\setminus \mathcal{A}_{g,\Z}$ in $\mathcal{A}^{{\rm min}}_{g,\Z}$ is greater or equal to 
$\ds\frac{g(g+1)}{2}-\ds\frac{g(g-1)}{2}=g \ge 2$, so is for  the codimension of 
$\mathcal{A}^{{\rm min}}_{g,\Z}\setminus \mathcal{A}^{{\rm reg}}_{g,\Z}$ in 
$\mathcal{A}^{{\rm min}}_{g,\Z}$.  
Let $f:X'\lra \mathcal{A}^{{\rm reg}}_{g,\Z}$ be the universal abelian variety which is a morphism of schemes over $\Z$. 
This will be defined as follows. We first consider the universal abelian variety $X\lra 
 [\mathcal{A}_{g,\Z}]$ over the smooth stack $[\mathcal{A}_{g,\Z}]$ (see \cite[Theorem 6.7, p.130]{CF} 
 for the properties of $[\mathcal{A}_{g,\Z}]$). 
Since $\mathcal{A}_{g,\Z}$ is a corase moduli scheme of $[\mathcal{A}_{g,\Z}]$ (\cite[Theorem 2.3-(3), p.150]{CF}), 
there is a natural map $\alpha:\mathcal{A}_{g,\Z}\lra [\mathcal{A}_{g,\Z}]$ as a stack (see also \cite[Theorem 4.10, p.23]{CF}). 
Then, the map $f$, as a morphism of stacks, is defined to be the fiber product of $X\lra 
 \mathcal{A}_{g,\Z}$ and $\alpha|_{\mathcal{A}^{{\rm reg}}_{g,\Z}}$. Then, it yields the desired morphism 
$f:X'\lra \mathcal{A}^{{\rm reg}}_{g,\Z}$ by Example 5.1.7, p.121 of \cite{Olsson}.      
Since $f$ is smooth by the fiber-wise argument, 
$\E=f_\ast \Omega^1_{X/\mathcal{A}^{{\rm reg}}_{g,\Z}}$ is a locally free sheaf of rank $g$ and it is clearly  
reflexive. Let $\rho:GL_g\lra {\rm Aut}_g(V_\rho)$ be an irreducible algebraic representation. 
Since Young symmetrizers in Weyl's construction of $\rho$ are defined over $\Z[\frac{1}{g!}]$ 
(we just observe the denominators of Young symmetrizers), 
the representation $\rho$ is defined over $\Z[\frac{1}{g!}]$. 
This is not optimal, for example, the determinant character is defined over $\Z$ for any $g$.    
Let $R_\rho$ be the minimal subring of $\Z[\frac{1}{g!}]$ such that $\rho$ is defined.  
For each $\rho$, we can associate a locally free sheaf $\E_\rho$ on $\mathcal{A}^{{\rm reg}}_{g,R_\rho}:=
\mathcal{A}^{{\rm reg}}_{g,\Z}\times_\Z R_\rho$ such that  $\E_\rho$ is locally isomorphic to 
$V_\rho(R_\rho)\otimes_{R_\rho}\mathcal{O}_{\mathcal{A}^{{\rm reg}}_{g,R_\rho}}$.   
By Theorem 3, p.817 of \cite{Ghitza}, there exists unique extensions $\widetilde{\E}_\rho$ and $\E^{{\rm min}}_\rho$ of $\E_\rho$ on 
$\mathcal{A}_{g,R_\rho}:=\mathcal{A}_{g,\Z}\times_\Z R_\rho$ and $\mathcal{A}^{{\rm min}}_{g,R_\rho}:=
\mathcal{A}^{{\rm min}}_{g,\Z}\times_\Z R_\rho$ respectively. They are both coherent sheaves such that 
$$H^0(\mathcal{A}_{g,R_\rho},\widetilde{\E}_\rho)=H^0(\mathcal{A}^{{\rm reg}}_{g,R_\rho},\E_\rho)
=H^0(\mathcal{A}^{{\rm min}}_{g,R_\rho},\E^{{\rm min}}_\rho).$$ 
Note that $\E^{{\rm min}}_\rho$ is also a unique extension of $\widetilde{\E}_\rho$. 
By construction the ample line bundle $\mathcal{L}$ is a unique extension of the line bundle $\omega:=\det \E$. We denote by $\widetilde{L}$ a 
unique extension of $\omega$ on $\mathcal{A}_{g,\Z}$. 
Then we also have 
\begin{equation}\label{imp-Siegel}
H^0(\mathcal{A}_{g,R_\rho},\widetilde{\E}_\rho\otimes \widetilde{\mathcal{L}}^{\otimes k})=
H^0(\mathcal{A}^{{\rm reg}}_{g,R_\rho},\E_\rho\otimes \omega^{\otimes k})
=H^0(\mathcal{A}^{{\rm min}}_{g,R_\rho},\E^{{\rm min}}_\rho\otimes\mathcal{L}^{\otimes k}).
\end{equation}
By flat base change (Proposition 9.3 in Chapter III of \cite{H}), GAGA \cite{Serre}, and 
Serre's extension theorem (see the proof of Theorem 10.14 of \cite{BB}), we have  
\begin{equation}\label{base-change}
M_{\rho\otimes\det^k}({\rm Sp}_{2g}(\Z))\simeq 
H^0(\mathcal{A}_{g,R_\rho},\widetilde{\E}_\rho\otimes \widetilde{\mathcal{L}}^{\otimes k})\otimes \C
=H^0(\mathcal{A}^{{\rm min}}_{g,R_\rho},\E^{{\rm min}}_\rho\otimes\mathcal{L}^{\otimes k})\otimes \C.
\end{equation}
We need to compare $M_{\rho\otimes\det^k}({\rm Sp}_{2g}(\Z),R_\rho)$ with $H^0(\mathcal{A}_{g,R_\rho},\widetilde{\E}_\rho\otimes \widetilde{\mathcal{L}}^{\otimes k})^{{\rm TF}}$. 
For each integer $N\ge 1$ there exists a quasi-projective model $\mathcal{A}_{g,N}$ over $\Z$ of 
$\G(N)\bs \mathbb{H}_g$ such that $\mathcal{A}_{g,N}$ is smooth over $R_N:=\Z[\frac{1}{N},\zeta_N]$ if $N\ge 3$ together with 
a finite \'etale morphism $\pi_{m,n}:\mathcal{A}_{g,n}\lra \mathcal{A}_{g,m}$ over $R_n$ if $m|n$ 
(cf. Theorem 6.7, p. 130 of \cite{CF} and Remark 6.2-(c), p.121 of loc.cit.). Put 
$R_{\rho,N}=R_\rho[\frac{1}{N},\zeta_N]$.   
\begin{lem}\label{compare}Let $U_N$ be the inverse image of $\mathcal{A}^{{\rm reg}}_{g,R_{\rho}}$ under the morphism 
$\pi_{1,N}/{\rm Spec}\hspace{0.5mm}R_\rho$ for $N\ge 3$. Let $X'_N=X'\times_{\mathcal{A}^{{\rm reg}}_{g,R_{\rho}}}U_N$ 
be the fiber product of $f$ and $\pi_{1,N}|_{U_N}$. 
Then it holds that 
$$\pi^\ast f_\ast\Omega^1_{X'/\mathcal{A}^{{\rm reg}}_{g,R_{\rho}}}\simeq 
f'_\ast\Omega^1_{X'_N/U_N}$$
where $\pi:=\pi_{1,N}$ and $f':X'_N\lra U_N$ is the base extension of $f$ with respect to $\pi_{1,N}|_{U_N}$.     
\end{lem}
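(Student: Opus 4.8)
The plan is to deduce the isomorphism from two standard base-change facts applied to the Cartesian square
\[
\begin{CD}
X'_N @>{p}>> X' \\
@V{f'}VV @VV{f}V \\
U_N @>{\pi}>> \mathcal{A}^{{\rm sm}}_{g,R_{\rho,N}}
\end{CD}
\]
Here $p:X'_N\lra X'$ is the projection, $f':X'_N\lra U_N$ is the base change of $f$, and $\pi:=\pi_{1,N}|_{U_N}$ is the restriction of the finite \'etale morphism $\pi_{1,N}$ to the open preimage $U_N$; as such $\pi$ is itself finite \'etale, in particular flat. By the very definition $X'_N=X'\times_{\mathcal{A}^{{\rm sm}}_{g,R_{\rho,N}}}U_N$, so the square is Cartesian.

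First I would invoke the compatibility of K\"ahler differentials with base change: for the Cartesian square above there is a canonical isomorphism $p^\ast \Omega^1_{X'/\mathcal{A}^{{\rm sm}}_{g,R_{\rho,N}}}\simeq \Omega^1_{X'_N/U_N}$ (Proposition 8.10 in Chapter II of \cite{H}), since relative differentials commute with arbitrary base change. Next, because $f$ is the restriction of the universal abelian scheme it is proper and smooth, hence quasi-compact and quasi-separated, while $\pi$ is flat; therefore flat base change for cohomology applies (Proposition 9.3 in Chapter III of \cite{H}) and gives $\pi^\ast f_\ast \mathcal{G}\simeq f'_\ast p^\ast \mathcal{G}$ for every quasi-coherent sheaf $\mathcal{G}$ on $X'$. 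Taking $\mathcal{G}=\Omega^1_{X'/\mathcal{A}^{{\rm sm}}_{g,R_{\rho,N}}}$ and chaining with the previous isomorphism yields
\[
\pi^\ast f_\ast \Omega^1_{X'/\mathcal{A}^{{\rm sm}}_{g,R_{\rho,N}}}\simeq f'_\ast p^\ast \Omega^1_{X'/\mathcal{A}^{{\rm sm}}_{g,R_{\rho,N}}}\simeq f'_\ast \Omega^1_{X'_N/U_N},
\]
which is exactly the assertion of the lemma.

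The lemma is thus essentially formal, and the only point requiring care---the step I expect to be the main (though minor) obstacle---is verifying that the hypotheses of flat base change survive the passage to the smooth locus. Concretely, one must confirm that $\pi$ is flat after restricting to the open preimage $U_N$ (immediate, as flatness and \'etaleness are preserved under restriction to an open subscheme, and $U_N$ is by construction such a preimage) and that $f'$, the base change of the proper smooth abelian scheme $f$, is again proper and smooth so that the relevant sheaves stay coherent and the base-change comparison map is an isomorphism. Both verifications are routine, so the statement reduces entirely to the two cited results from \cite{H}.
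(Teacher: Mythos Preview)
Your argument is correct and essentially identical to the paper's own proof: both set up the same Cartesian square, invoke Hartshorne's Proposition II.8.10 for the base-change isomorphism of relative differentials, and then apply flat base change (Proposition III.9.3 of \cite{H}) using that $\pi$ is \'etale hence flat. The extra care you take in verifying that the hypotheses survive restriction to $U_N$ is not in the paper but is harmless elaboration.
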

\begin{proof}
Let us consider the following Cartesian diagram:

 \catcode`\@=11
\newdimen\cdsep
\cdsep=3em

\def\cdstrut{\vrule height .6\cdsep width 0pt depth .4\cdsep}
\def\@cdstrut{{\advance\cdsep by 2em\cdstrut}}

\def\arrow#1#2{
  \ifx d#1
    \llap{$\scriptstyle#2$}\left\downarrow\cdstrut\right.\@cdstrut\fi
  \ifx u#1
    \llap{$\scriptstyle#2$}\left\uparrow\cdstrut\right.\@cdstrut\fi
  \ifx r#1
    \mathop{\hbox to \cdsep{\rightarrowfill}}\limits^{#2}\fi
  \ifx l#1
    \mathop{\hbox to \cdsep{\leftarrowfill}}\limits^{#2}\fi
}
\catcode`\@=12

\cdsep=3em
$$
\begin{matrix}
  X'                   & \arrow{l}{\pi'={\rm pr}_{X'}}   &  X'_N     \cr
  \arrow{d}{f} &                      & \arrow{d}{f'} \cr
      \mathcal{A}^{{\rm reg}}_{g,R_{\rho}}             & \arrow{l}{\pi} & U_N     \cr
\end{matrix}
$$
By Proposition 8.10, p.175 of \cite{H}, firstly we have $\pi'^\ast\Omega^1_{\Omega^1_{X'/\mathcal{A}^{{\rm reg}}_{g,R_{\rho}}}}
\simeq \Omega^1_{X'_N/U_N}$. 
Since $\pi$ is \'etale, in particular, it is flat, hence by flat base change (see Proposition 9.3, p.255 of \cite{H}), 
we have 
$$\pi^\ast f_\ast \Omega^1_{X'/\mathcal{A}^{{\rm reg}}_{g,R_{\rho}}}=f'_\ast \pi'^\ast  \Omega^1_{X'/\mathcal{A}^{{\rm reg}}_{g,R_{\rho}}}=
f'_\ast \Omega^1_{X'_N/U_N}.
$$ 
\end{proof}
\begin{prop}\label{int}There exists an isomorphism  
$$\iota_\rho:H^0(\mathcal{A}_{g,R_{\rho}},\widetilde{\E}_\rho\otimes \widetilde{\mathcal{L}}^{\otimes k})^{{\rm TF}} 
\stackrel{\sim}{\lra} M_{\rho\otimes\det^k}({\rm Sp}_{2g}(\Z),R_{\rho})$$ as an $R_{\rho}$-module such that 
its base change to $\C$ coincides with the isomorphism in (\ref{base-change}). 
\end{prop}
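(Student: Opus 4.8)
The plan is to exhibit both sides as finitely generated torsion-free $R_\rho$-modules sitting inside the single $\C$-vector space $M_{\rho\otimes\det^k}({\rm Sp}_{2g}(\Z))$ of (\ref{base-change}), and to prove they coincide by means of the $q$-expansion principle. First I would define $\iota_\rho$: a global section $s\in H^0(\mathcal{A}_{g,R_\rho},\widetilde{\E}_\rho\otimes\widetilde{\mathcal{L}}^{\otimes k})$ determines, through the base change to $\C$ recorded in (\ref{base-change}), a holomorphic Siegel modular form $F_s$ of weight $\rho\otimes\det^k$ for ${\rm Sp}_{2g}(\Z)$. To see that $F_s$ lies in $M_{\rho\otimes\det^k}({\rm Sp}_{2g}(\Z),R_\rho)$, I would restrict $s$ to the formal neighbourhood of the standard cusp, i.e. pull back along the Mumford--Tate semi-abelian degeneration, which is defined over $R_\rho$; the resulting algebraic Fourier--Jacobi expansion has coefficients in $R_\rho$ because $s$ is $R_\rho$-integral, and the standard comparison between the algebraic and analytic $q$-expansions identifies these with the classical Fourier coefficients of $F_s$. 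Setting $\iota_\rho(s)=F_s$ then gives an $R_\rho$-linear map into $M_{\rho\otimes\det^k}({\rm Sp}_{2g}(\Z),R_\rho)$.

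Second, since the target is a submodule of a $\C$-vector space it is torsion free, so $\iota_\rho$ annihilates the torsion submodule and factors through $H^0(\ldots)^{{\rm TF}}$. For injectivity I would use that $R_\rho\subset\Q$ is a domain with fraction field $\Q$ and that $\Q\hookrightarrow\C$ is flat, so the kernel of the localization map $H^0(\ldots)\to H^0(\ldots)\otimes_{R_\rho}\C$ is precisely the torsion; hence $\iota_\rho(s)=0$ forces $s\otimes 1=0$ under (\ref{base-change}), so $s$ is torsion and its class in $H^0(\ldots)^{{\rm TF}}$ vanishes. In this way $\iota_\rho$ identifies $H^0(\ldots)^{{\rm TF}}$ with an $R_\rho$-lattice inside $M_{\rho\otimes\det^k}({\rm Sp}_{2g}(\Z),R_\rho)$ whose $\C$-base change is, by construction, the isomorphism (\ref{base-change}).

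The crux is surjectivity: every Siegel modular form with $R_\rho$-rational Fourier coefficients should come from an integral section. Here I would invoke Lemma \ref{compare} to pass to a level $N\ge 3$, where $\mathcal{A}_{g,N}$ is smooth over $R_N=\Z[\frac{1}{N},\zeta_N]$ and carries the universal abelian scheme, so that the $q$-expansion principle of Chai--Faltings (cf. Theorem~6.7 of \cite{CF}) identifies $H^0\bigl(\mathcal{A}_{g,N,R_{\rho,N}},\pi_{1,N}^\ast(\widetilde{\E}_\rho\otimes\widetilde{\mathcal{L}}^{\otimes k})\bigr)$ with the level $N$ forms having $R_{\rho,N}$-integral Fourier coefficients. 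A given $F\in M_{\rho\otimes\det^k}({\rm Sp}_{2g}(\Z),R_\rho)$, viewed at level $N$, then arises from a section over $R_{\rho,N}$; its ${\rm Sp}_{2g}(\Z)$-invariance makes this section invariant under $G_N={\rm Sp}_{2g}(\Z/N\Z)$, so finite \'etale descent along $\pi_{1,N}$ together with Lemma \ref{compare} (which identifies $\pi_{1,N}^\ast\E$ with the level $N$ Hodge bundle, hence $\pi_{1,N}^\ast(\widetilde{\E}_\rho\otimes\widetilde{\mathcal{L}}^{\otimes k})$ with its level $N$ counterpart) descends it to a section of the level one sheaf over $R_{\rho,N}$, and the ${\rm Gal}(\Q(\zeta_N)/\Q)$-invariance of the Fourier coefficients of $F$ brings it down to $R_\rho[\frac{1}{N}]$. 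Running the same construction for a second auxiliary level $N'\ge 3$ with $\gcd(N,N')=1$ gives a section over $R_\rho[\frac{1}{N'}]$ with the same $q$-expansion, and since $\{{\rm Spec}\,R_\rho[\frac{1}{N}],\,{\rm Spec}\,R_\rho[\frac{1}{N'}]\}$ is an open cover of ${\rm Spec}\,R_\rho$, the two glue—by injectivity of the $q$-expansion map—to a section over $R_\rho$, the sought preimage of $F$.

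The hard part will be the geometric input to this surjectivity. One must set up the algebraic Fourier--Jacobi expansion at the level one cusp—where, lacking a fine universal family and in the presence of extra automorphisms, one is forced to work with the reflexive extensions $\widetilde{\E}_\rho$ and $\E^{{\rm min}}_\rho$ rather than with a moduli family—and match it against the classical Fourier expansion with the correct normalizing constants, so that ``$R_\rho$-integral $q$-expansion'' means the same thing on both sides. One must also verify that the finite \'etale and cyclotomic descents are compatible with the pushforward to, and the reflexive extension over, the singular minimal compactification $\mathcal{A}^{{\rm min}}_{g,R_\rho}$. Granting these points, $\iota_\rho$ is an isomorphism whose base change to $\C$ is (\ref{base-change}) by construction.
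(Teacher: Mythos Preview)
Your proposal is correct and follows the same overall strategy as the paper---pass to an auxiliary level $N\ge 3$ via Lemma~\ref{compare}, use the $q$-expansion principle there, and exploit two coprime auxiliary levels to pin down the coefficient ring---but the roles of these ingredients are swapped between the two arguments.

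For the \emph{construction and well-definedness} of $\iota_\rho$, you work directly at level one, evaluating a section on the Mumford degeneration to read off $R_\rho$-integral Fourier coefficients; as you note yourself, this is delicate because $\mathcal{A}_{g,R_\rho}$ is not a fine moduli scheme and one must deal with the reflexive extension across the singular locus. The paper avoids this entirely: it defines $\iota_\rho$ by pulling back along $\pi_{1,N}$ for $N=3$ and $N=5$, lands (via the well-behaved level-$N$ $q$-expansion principle) in $M_{\rho\otimes\det^k}(\Gamma(N),R_{\rho,N})$ for each, and then observes that the common image lies in $M_{\rho\otimes\det^k}(\Gamma(3),R_{\rho,3})\cap M_{\rho\otimes\det^k}(\Gamma(5),R_{\rho,5})\cap M_{\rho\otimes\det^k}({\rm Sp}_{2g}(\Z))=M_{\rho\otimes\det^k}({\rm Sp}_{2g}(\Z),R_\rho)$ because $R_{\rho,3}\cap R_{\rho,5}=R_\rho$. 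This is cleaner than setting up an algebraic $q$-expansion at level one.

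For \emph{surjectivity}, the situation is reversed. You lift $F$ to level $N$, descend along the finite \'etale cover $\pi_{1,N}$ by $G_N$-invariance, then do Galois descent to $R_\rho[\frac{1}{N}]$ and glue over two coprime $N$'s; this never introduces extraneous denominators. The paper instead applies the trace map for $\pi_{1,N}$, which only shows a priori that $H$ lies in $H^0(\mathcal{A}_{g,R_\rho},\widetilde{\E}_\rho\otimes\widetilde{\mathcal{L}}^{\otimes k})^{{\rm TF}}\otimes_{R_\rho}R_{\rho,N}[\frac{1}{|G|}]$, and then appeals once more to the $q$-expansion principle to conclude that the $R_\rho$-integrality of the Fourier coefficients forces $H$ to live over $R_\rho$. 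Your descent argument here is arguably tidier.

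In short: both proofs are valid and share the same skeleton; each uses the ``two coprime levels'' trick, just at a different stage. If you adopt the paper's pullback-to-$N=3,5$ definition of $\iota_\rho$, you can drop the level-one Fourier--Jacobi machinery you flagged as the hard part.
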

\begin{proof}Let $g:X_N\lra \mathcal{A}_{g,R_{N}}$ be the universal abelian variety. 
Let $\mathcal{F}_\rho$ (resp. $\mathcal{F}'_\rho$) be the vector bundle on $\mathcal{A}_{g,R_{\rho,N}}$ 
(resp. $U_N/R_{\rho,N}$) associated to 
the Hodge bundle $g_\ast \Omega^1_{X_N/\mathcal{A}_{g,R_{N}}}$ (resp. $f'_\ast \Omega^1_{X'_N/U_N}$) and $\rho$. 
By Theorem 3, p.517 of \cite{Ghitza} $\mathcal{F}'_\rho$ is extended to 
$\mathcal{F}_{\rho}$. 
By Lemma \ref{compare} and (\ref{imp-Siegel}), since $\pi$ is finite surjective,  we have an injective map 
$$\iota'_{\rho,N}:H^0(\mathcal{A}_{g,R_\rho},\widetilde{\E}_\rho\otimes \widetilde{\mathcal{L}}^{\otimes k})^{\rm TF}=
H^0(\mathcal{A}^{{\rm reg}}_{g,R_\rho},\E_\rho\otimes \omega^{\otimes k})^{\rm TF}$$
$$\stackrel{\pi^\ast}{\hookrightarrow} H^0(U_N/R_{\rho,N},\mathcal{F}'_{\rho\otimes\det^k})^{{\rm TF}}=
H^0(\mathcal{A}_{g,R_{\rho,N}},\mathcal{F}_{\rho\otimes\det^k})^{{\rm TF}}  
$$
as a $R_\rho$ module.  Here $R_{\rho,N}$ is naturally regarded as a $R_{\rho}$-module.  Clearly the base extension of $\iota'_{\rho,N}$ to $\C$ yields 
the isomorphism in (\ref{base-change}).  

Since $N\ge 3$ we have a natural identification 
$H^0(\mathcal{A}_{g,R_{\rho,N}},\mathcal{F}_{\rho\otimes\det^k})^{{\rm TF}}=M_{\rho\otimes\det^k}(\G(N),R_{\rho,N})$ by using 
$q$-expansion principle. 
Combining it with $\iota'_N$, we have an injective morphism 
$$\iota_{\rho,N}:
H^0(\mathcal{A}_{g,R_\rho},\widetilde{\E}_\rho\otimes \widetilde{\mathcal{L}}^{\otimes k})^{\rm TF}\hookrightarrow 
M_{\rho\otimes\det^k}(\G(N),R_{\rho,N})$$
as $R_\rho$ modules.   
Applying this to $N=3$ and $N=5$ 
we see that ${\rm Im}(\iota_{\rho,3})\cap {\rm Im}(\iota_{\rho,5})$ is included in 
$$M_{\rho\otimes\det^k}(\G(3),R_{\rho,3})\cap 
M_{\rho\otimes\det^k}(\G(5),R_{\rho,5})\cap M_{\rho\otimes\det^k}({\rm Sp}_{2g}(\Z))=
M_{\rho\otimes\det^k}({\rm Sp}_{2g}(\Z),R_\rho)$$
 since $R_{\rho,3}\cap R_{\rho,5}=R_{\rho}$. 
Hence we have an injective homomorphism 
$$\iota_\rho:H^0(\mathcal{A}_{g,R_{\rho}},\widetilde{\E}_\rho\otimes \widetilde{\mathcal{L}}^{\otimes k})^{{\rm TF}} 
\lra M_{\rho\otimes\det^k}({\rm Sp}_{2g}(\Z),R_{\rho})$$ as a $R_{\rho}$-module such that 
its base change to $\C$ coincides with the isomorphism in (\ref{base-change}). What we need to prove is the 
surjectivity of $\iota_\rho$. 
Pick an element $F$ in the right hand side. We regard it as an element in $M_{\rho\otimes\det^k}(\G(N),R_{\rho,N})$ 
for some $N\ge 3$. Then by $q$-expansion principle it can be regarded as an element $H$ in 
$H^0(\mathcal{A}_{g,R_{\rho,N}},\mathcal{F}_{\rho\otimes\det^k})^{{\rm TF}}$ such that 
the finite group 
$G:=\G(1)/\G(N)$ acts trivially on $H$. By using the trace map for $\pi_{1,N}$ we see that 
$H$ belongs to $H^0(\mathcal{A}_{g,R_{\rho}},\widetilde{\E}_\rho\otimes \widetilde{\mathcal{L}}^{\otimes k})^{{\rm TF}}
\otimes_{R_{\rho}}R_{\rho,N}[\frac{1}{|G|}]$. However by $q$-expansion principle again, 
the Fourier coefficients of $H$ are all defined over $R_\rho$ since so is $F$. 
Hence $F$ belongs to $H^0(\mathcal{A}_{g,R_{\rho}},\widetilde{\E}_\rho\otimes \widetilde{\mathcal{L}}^{\otimes k})^{{\rm TF}}$. 
This completes the proof.   
\end{proof}

\begin{thm}\label{fg1}Let $\rho$ be as above. Fix a positive integer $m$. 
Let $R_\rho$ be a subring of $\Z[\frac{1}{g!}]$ such that $\rho$ is defined.
Then it holds that 
 the graded vector space $M_{\rho,m,\ast}:=\ds\bigoplus_{k\in \Z_{\ge 0}}M_{\rho\otimes\det^{km}}({\rm Sp_{2g}}(\Z),R_\rho)$ 
is finitely generated over  $M_{m,\ast}:=\ds\bigoplus_{k\in \Z_{\ge 0}}M_{\det^{km}}({\rm Sp_{2g}}(\Z),R_\rho)$. 
\end{thm}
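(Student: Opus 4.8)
The plan is to move the whole question onto the minimal compactification $Y:=\mathcal{A}^{{\rm min}}_{g,R_\rho}$, prove a purely geometric finite-generation statement there for an ample line bundle, and then descend it through the torsion-free quotients. First I would invoke Proposition \ref{int} together with (\ref{imp-Siegel}) to rewrite each graded piece as a torsion-free quotient of a cohomology group on $Y$. For the given $\rho$ this yields, for every $k\ge 0$, an $R_\rho$-module isomorphism $M_{\rho\otimes\det^{km}}({\rm Sp}_{2g}(\Z),R_\rho)\simeq H^0(Y,\E^{{\rm min}}_\rho\otimes \mathcal{L}^{\otimes km})^{{\rm TF}}$; applying the same construction to the trivial representation (whose associated sheaf is $\O_Y$) and base-changing flatly to $R_\rho$ gives $M_{\det^{km}}({\rm Sp}_{2g}(\Z),R_\rho)\simeq H^0(Y,\mathcal{L}^{\otimes km})^{{\rm TF}}$. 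Writing $\mathcal{M}:=\mathcal{L}^{\otimes m}$, $S:=\bigoplus_{k\ge 0}H^0(Y,\mathcal{M}^{\otimes k})$ and $N:=\bigoplus_{k\ge 0}H^0(Y,\E^{{\rm min}}_\rho\otimes \mathcal{M}^{\otimes k})$, and using that the $R_\rho$-torsion submodules are homogeneous (so that forming ${\rm TF}$ commutes with the grading), the two targets become exactly $M_{m,\ast}=S^{{\rm TF}}$ and $M_{\rho,m,\ast}=N^{{\rm TF}}$.

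Next I would establish the geometric input. Since $\mathcal{L}$ is ample on $\mathcal{A}^{{\rm min}}_{g,\Z}$ over $\Z$ by Theorem 2.3 of \cite{CF}, its pullback to $Y$ is ample, and hence so is $\mathcal{M}$ for $m\ge 1$; moreover $\E^{{\rm min}}_\rho$ is coherent. Applying Lemma 16.1 of \cite{Stacks} to the projective $R_\rho$-scheme $Y$ with the ample bundle $\mathcal{M}$ then shows that $S$ is a finitely generated $R_\rho$-algebra and that $N$ is a finitely generated graded $S$-module. This step is standard and uses nothing beyond the ampleness of $\mathcal{L}$ recorded from \cite{CF}.

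The remaining --- and, I expect, only delicate --- step is to descend finite generation to the torsion-free quotients. The surjection $N\twoheadrightarrow N^{{\rm TF}}$ is $S$-linear, so $N^{{\rm TF}}$ is finitely generated over $S$, and it suffices to check that this action factors through $S^{{\rm TF}}$. Indeed, if $s\in S$ is $R_\rho$-torsion, say $rs=0$ with $0\neq r\in R_\rho$, then $r(s\bar n)=0$ for every $\bar n\in N^{{\rm TF}}$, and torsion-freeness of $N^{{\rm TF}}$ forces $s\bar n=0$; thus the torsion submodule of $S$ annihilates $N^{{\rm TF}}$. Consequently the images of a finite $S$-generating set of $N$ generate $N^{{\rm TF}}$ over $S^{{\rm TF}}$, which is precisely the assertion that $M_{\rho,m,\ast}$ is finitely generated over $M_{m,\ast}$. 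The only point demanding care is to keep the ring structure on $M_{m,\ast}=S^{{\rm TF}}$ and the module structure on $M_{\rho,m,\ast}=N^{{\rm TF}}$ compatible under the passage to ${\rm TF}$; everything else is formal.
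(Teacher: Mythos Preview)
Your proposal is correct and follows essentially the same route as the paper: the paper's one-line proof simply invokes Proposition~\ref{int} together with the argument of Theorem~\ref{arith1}, which amounts precisely to identifying the graded pieces with $H^0$ on $\mathcal{A}^{{\rm min}}_{g,R_\rho}$, applying Lemma~16.1 of \cite{Stacks} via the ampleness of $\mathcal{L}$, and then passing to torsion-free quotients. Your write-up is more explicit about the last step (showing that the $S$-action on $N^{{\rm TF}}$ factors through $S^{{\rm TF}}$), but this is exactly what the paper's references unpack to.
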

\begin{proof}The claim follows immediately from Theorem \ref{int} and the proof of Theorem \ref{arith1}. 
\end{proof}

\begin{cor}\label{fg2}
Let $g=2$ and let $k_1\ge k_2\ge 1,\ m\ge 1$ be integers. 
The graded ring 
$$\ds\bigoplus_{k\in \Z}M_{(k_1+mk,k_2+mk)}({\rm Sp}_4(\Z),\Z)$$ is 
finitely generated over $\ds\bigoplus_{k\in \Z}M_{mk}({\rm Sp}_4(\Z),\Z)$.
\end{cor}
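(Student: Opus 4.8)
The plan is to obtain the statement as a direct specialization of Theorem \ref{fg1}, the only genuine new input being the computation of the ring of definition $R_\rho$ when $g=2$; the remaining work is a translation of weights and a short argument to enlarge the grading from $\Z_{\ge 0}$ to $\Z$.

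First I would rewrite the weights. For $GL_2$ the irreducible representation of highest weight $(k_1,k_2)$ is $\rho_{(k_1,k_2)}\cong {\rm Sym}^{k_1-k_2}\otimes\det^{k_2}$, and tensoring by $\det^{km}$ raises the highest weight by $(km,km)$; hence $\rho_{(k_1,k_2)}\otimes\det^{km}=\rho_{(k_1+km,\,k_2+km)}$ and
$$M_{(k_1+km,\,k_2+km)}({\rm Sp}_4(\Z),\Z)=M_{\rho_{(k_1,k_2)}\otimes\det^{km}}({\rm Sp}_4(\Z),\Z),\quad M_{mk}({\rm Sp}_4(\Z),\Z)=M_{\det^{mk}}({\rm Sp}_4(\Z),\Z).$$
Thus the two graded objects in the Corollary are, up to the index set, exactly those appearing in Theorem \ref{fg1} for $\rho=\rho_{(k_1,k_2)}$ and the given $m$.

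The crucial point is that $R_\rho=\Z$ when $g=2$. The general construction via Young symmetrizers only guarantees $R_\rho\subset\Z[\tfrac{1}{g!}]=\Z[\tfrac12]$, but for $GL_2$ no denominator is needed: the standard representation and each symmetric power ${\rm Sym}^n$ are realized over $\Z$ on the monomial lattice, and $\det$ is a character over $\Z$, so $\rho_{(k_1,k_2)}$ is defined over $\Z$ and its minimal ring of definition is $\Z$. Concretely the associated sheaf is $\E_\rho={\rm Sym}^{k_1-k_2}\E\otimes\omega^{k_2}$, manifestly defined over $\Z$. Feeding $R_\rho=\Z$ into Theorem \ref{fg1} shows that $\bigoplus_{k\ge 0}M_{(k_1+km,\,k_2+km)}({\rm Sp}_4(\Z),\Z)$ is finitely generated over $\bigoplus_{k\ge 0}M_{mk}({\rm Sp}_4(\Z),\Z)$.

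It remains to pass from $k\ge 0$ to $k\in\Z$. On the base side, for $k<0$ the scalar weight $mk$ is negative, so $M_{mk}({\rm Sp}_4(\Z),\Z)=0$ and the graded base ring over $\Z$ coincides with the one over $\Z_{\ge 0}$, with degree-zero part the constants $M_{(0,0)}=\Z$. On the module side, a form whose lowest weight entry $k_2+mk$ is negative vanishes, so only the finitely many $k<0$ with $-k_2\le mk<0$ can contribute; each such $M_{(k_1+km,\,k_2+km)}({\rm Sp}_4(\Z),\Z)$ is a finitely generated (indeed free) $\Z$-module, hence finitely generated over the degree-zero ring $\Z$. Adjoining these finitely many extra generators to the ones furnished by Theorem \ref{fg1} yields finite generation of $\bigoplus_{k\in\Z}M_{(k_1+km,\,k_2+km)}({\rm Sp}_4(\Z),\Z)$ over $\bigoplus_{k\in\Z}M_{mk}({\rm Sp}_4(\Z),\Z)$. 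The only real obstacle is the identification $R_\rho=\Z$: this is exactly where the smallness of $g=2$ is used, since for larger $g$ Weyl's construction genuinely forces denominators dividing $g!$, and one would then be restricted to $\Z[\tfrac{1}{g!}]$-coefficients.
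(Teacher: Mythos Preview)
Your proof is correct and follows the same approach as the paper: identify $\rho_{(k_1,k_2)}={\rm Sym}^{k_1-k_2}{\rm St}_2\otimes\det^{k_2}$, observe that this is defined over $\Z$ so that $R_\rho=\Z$, and invoke Theorem~\ref{fg1}. You are in fact more careful than the paper in explicitly justifying the passage from the grading set $\Z_{\ge 0}$ appearing in Theorem~\ref{fg1} to the set $\Z$ in the statement of the corollary, a point the paper leaves implicit.
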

\begin{proof}Clearly $\rho={\rm Sym}^{k_1-k_2}{\rm St}_2\otimes\det^{k+k_2}$ is defined over $\Z$. 
Hence we can take $R_\rho=\Z$. The claim immediately follows from Theorem \ref{fg1}. 
\end{proof}

\begin{rmk}The strategy in proving Theorem \ref{fg1} may work for other congruence subgroups, as 
$\G_0(M)=
\Big\{
\left(
\begin{array}{cc}
A & B\\
C & D 
\end{array}
\right)\in {\rm Sp}_{2g}(\Z)\ \Big|\ C\equiv 0\ {\rm mod}\ M  
\Big\}$ for $M\in \Z_{>0}$. 
For example, the same claim for $\G_0(M)$ is true if we replace $R_\rho$ with $R_\rho[\frac{1}{M}]$. The results in \cite{CF} 
will be substituted into the corresponding results in \cite{L-book}.  Checking the details will be left to interested readers. 
\end{rmk}

\end{document}